\documentclass[12pt]{iopart}

\usepackage{amsthm,amssymb,bbm,graphicx,psfrag,color,enumerate,url}
\usepackage[latin1]{inputenc}

\newtheorem{thm}{Theorem}[section]

\newtheorem{lemma}[thm]{Lemma}

\newtheorem{remark}[thm]{Remark}
\newtheorem{exa}[thm]{Example}

\def\req#1{{\rm(\ref{eq:#1})}}
\newcommand{\labeq}[1]{\label{eq:#1}}
\def\norm#1{\hspace{0.2ex} \|#1\| \hspace{0.2ex}}

\newcommand{\im}{{\rmi}}

\newcommand{\Hd}{H_\diamond}
\newcommand{\Ld}{L_\diamond}

\newcommand{\dx}[1][x]{\ensuremath{\ \rmd #1}}

\newcommand{\R}{\ensuremath{\mathbbm{R}}}
\newcommand{\C}{\ensuremath{\mathbbm{C}}}
\newcommand{\N}{\ensuremath{\mathbbm{N}}}

\hyphenation{mea-sure-ments mono-to-nic-i-ty si-mul-ta-neous char-ac-ter-i-za-tion unique-ness po-si-tion-ed}

\newcommand{\kommentar}[1]{}



\usepackage{tikz}
\usepackage{tikz-3dplot}

\begin{document}

\title[Combining fdEIT and US-modulated EIT]{Combining frequency-difference and ultrasound modulated electrical impedance tomography\footnotemark}

\renewcommand{\footnoterule}{%
  \kern -3pt
  \hrule width \textwidth height 1pt
  \kern 2pt
}
\footnotetext{%
\scriptsize
This is an author-created, un-copyedited version of an article published in \emph{Inverse Problems} \textbf{31}(9), 095003, 2015.
IOP Publishing Ltd is not responsible for any errors or omissions in this version of the manuscript or any version derived from it. The Version of Record is available online at 
\url{http://dx.doi.org/10.1088/0266-5611/31/9/095003}.
}

\author{Bastian Harrach$^{1}$, Eunjung Lee$^{2}$, Marcel Ullrich$^{1}$}
\address{$^{1}$ Department of Mathematics,
University of Stuttgart, Germany}
\address{$^{2}$ Department of Computational Science and Engineering, Yonsei University, Seoul, Korea}
\ead{\mailto{harrach@math.uni-stuttgart.de}, \mailto{eunjunglee@yonsei.ac.kr}, \\ 
\mailto{marcel.ullrich@mathematik.uni-stuttgart.de}}

\begin{abstract}
Electrical impedance tomography (EIT) is highly affected by modeling errors regarding electrode positions and the shape of the imaging domain. In this work, we propose a new inclusion detection technique that is completely independent of such errors. Our new approach is based on a combination of frequency-difference and ultrasound modulated
EIT measurements.
\end{abstract}


\ams{
35R30, 
35J25 
}


\section{Introduction}
\label{Sec:intro}

The goal of electrical impedance tomography (EIT) is to image the
conductivity inside a subject. To that end, electrodes are attached to the subject's boundary,
and one measures the voltages that are required to drive a specified static or time-harmonic current
through different combinations of the attached electrodes.
The potential advantages of EIT compared to other imaging technique are
that conductivity values are typically of a high specificity, and that EIT devices are comparatively cheap and easily portable. 

The inverse problem of reconstructing the conductivity from boundary voltage and current
measurements is known to be highly non-linear and ill-posed. The measurements are very insensitive
to changes in the conductivity values away from the electrodes.
They do, however, strongly depend on the measurement geometry, i.e., the electrode position and the shape of the
imaging domain. In most applications, it is not feasible to precisely measure the geometry, and electrodes
are frequently placed by hand. Hence, such modeling or geometry errors present a major challenge for practical EIT applications.

The main focus of EIT is often on the detection and localization of conductivity
inclusions or anomalies (e.g., material faults or pathological regions) inside an
otherwise more or less homogeneous medium. 

In this work, we propose a new measurement setup for anomaly detection and describe a
reconstruction method that is completely unaffected by geometrical modelling errors,
as it does not require knowledge of the electrode position or the shape
of the imaging domain.

The main idea of our new technique is to combine ultrasound-modulated EIT measurements
with frequency-difference EIT measurements. We focus an ultrasound wave on a small region inside the imaging domain
to alter the conductivity in the focusing region. The resulting effect on the EIT measurements is then
compared to the effect of a change in the electric current frequency.
This comparison shows whether the focusing region lies inside a conductivity anomaly or not.

To decide whether the focusing region lies inside an anomaly, our method
utilizes only the two sets of EIT measurements (with ultrasound-modulation and
after the frequency change) and the ratio of
the background conductivity before and after the frequency change.
The latter can be estimated from comparing EIT measurements
before and after the frequency change, as it is done in weighted frequency-difference
EIT (see the references below). The method can be implemented using simple monotonicity tests, i.e., the taken voltage measurements are arranged in the form of a matrix and then compared in the sense of matrix definiteness
(resp., in the idealized case of continuous boundary measurements, the measurements are interpreted as Neumann-to-Dirichlet operators and compared in the sense of definiteness of self-adjoint compact operators).

Our new method does not use any forward simulations, or explicitly known special solutions, that would depend on the geometry of the setup. It does not require any knowledge of the
electrode position or the shape of the imaging domain, and is hence completely
unaffected by modeling errors.

We give a complete proof for our method for the case of
continuous boundary data, when the measurements are given by the Neumann-to-Dirichlet-operator.
For the case of measurements on a finite number of electrodes,
we prove that the method correctly identifies the case where the focusing region lies inside
the anomaly. We also give a physical justification (in the spirit of \cite{harrach2010factorization}),
that regions outside the anomaly will correctly be identified if
enough electrodes are used for the measurements, cf.\ remark \ref{remark:electrode}.

Let us now comment on related works and the origins of our approach. For a broad overview on electrical impedance tomography see \cite{henderson1978impedance,barber1984applied,brown1987sheffield,Metherall1996,Che99,Borcea2002,borcea2003addendum,Brown2003,lionheart2003eit,holder2004electrical,holder2004electrical,bayford2006bioimpedance,adler2011electrical,uhlmann2009electrical,Nguyen2012}. For the task of anomaly detection in EIT, let us refer to Friedmann and Isakov \cite{friedman1987detection,friedman1989uniqueness} for early works, Potthast \cite{potthast2006survey} for an overview on non-iterative methods, and \cite{harrach2010exact} for the recent result that shape
information is invariant under linearization. Iterative anomaly detection methods are commonly based on level-set approaches, cf., e.g. \cite{ito2001level,tai2004survey,chung2005electrical,dorn2006level,van2006level,rahmati2012level}.
Prominent non-iterative anomaly detection methods are the Factorization Method (see
\cite{kirsch2005factorization,Han03,Geb05,Han07,Hyv07,lechleiter2008factorization,Nac07_2,GH07,hakula2009computation,harrach2009detecting,Schmitt2009,harrach2010factorization,schmitt2011factorization,haddar2013numerical,chaulet2014factorization,choi2014regularizing,barthdetecting} and the recent overviews
\cite{Kirsch_book07,hanke2011sampling,harrach2013recent}), the enclosure method (see \cite{ikehata1999draw,Bru00,ikehata2000reconstruction,ikehata2000numerical,ikehata2002regularized,ikehata2004electrical,ide2007probing,uhlmann2008reconstructing,ide2010local}),
and the recently emerging monotonicity method (see the references below).

Our new method is based on a monotonicity-based comparison of
weighted frequency-difference EIT (fdEIT) and ultrasound-modulated EIT (UMEIT) measurements. Monotonicity-based comparisons were first considered as heuristic inclusion-detection methods and numerically tested by Tamburrino and Rubinacci \cite{Tamburrino02,Tamburrino06}. Recently, the monotonicity method was rigorously justified \cite{harrach2013monotonicity} using the concept of localized potentials \cite{gebauer2008localized}. Weighted fdEIT has been introduced in order to improve
the reconstruction stability with respect to modeling errors in settings where no reference (anomaly-free) data is available, see \cite{seo2008frequency,harrach2009detecting,harrach2010factorization}.
The hybrid tomography technique UMEIT was introduced in \cite{zhang2004acousto,ammari2008electrical},\ cf. also \cite{kuchment20112d,ammari2012resolution,kocyigit2012acousto,kuchment2012mathematics,widlak2012hybrid,bal2013cauchy,monard2013inverse,bal2013levenberg,bonnetier2014note} for more works on this subject.
When the measurement geometry is known, UMEIT allows to measure the interior electrical energy of the subject by altering the conductivity with a focused ultrasound waves (cf.\ the related idea of Impedance-Acoustic Tomography \cite{GS08}, where interior energy data is obtained from measuring expansion effects caused by electrical heating).
Knowledge of this additional interior energy information eliminates the major cause of ill-posedness in the reconstruction process, which could greatly increase image resolution.
Moreover, let us mention that combinations of EIT and ultrasound have been studied 
that rely on data-fusion rather than on coupled physics, e.g., by using ultrasound images as prior information for EIT reconstructions, cf., e.g., \cite{soleimani2006electrical,steiner2008bio}.

At this point, it has to be noted, that (up to the knowledge of the authors) the idea of using focused waves in ultrasound-modulated EIT (UMEIT) yet has to be experimentally validated. The results in this work are derived under the idealistic assumption of a perfectly focused ultrasound waves that changes the
conductivity in a well-defined circular region. Of course, in reality, such a perfect focus cannot be realized, and the ultrasound wave will also affect the conductivity outside the focusing region.
Moreover, the location of the focusing region will not be known exactly but depend on the measurement geometry. It is, however, widely accepted that in typical EIT applications, conductivity contrast is much higher than ultrasound contrast, while ultrasound resolution is much higher than EIT resolution. 
Therefore we believe that techniques relying on UMEIT are worth investigating despite the current lack of practical validation.

The paper is organized as follows. In section \ref{Sec:continuous}, we start with describing the general setting of complex conductivity EIT and ultrasound modulated EIT for continuous boundary data.
Then we derive a monotonicity relation for complex conductivity EIT, and use this relation 
to develop an anomaly detection algorithm that is based on comparing EIT measurements at a non-zero
frequency with ultrasound-modulated DC measurements. Section \ref{Sec:electrodes} contains the corresponding results for a setting with finitely many electrodes using the shunt electrode model. 
In section \ref{Sec:numerics}, we illustrate our new method with two- and three-dimensional numerical results. Section \ref{Sec:conlusion_and_discussion} concludes the paper with a discussion of our results.

\section{Continuous boundary data}
\label{Sec:continuous}

\subsection{The setting}
\label{Subsec:setting_continuous}

We start by describing the general setting of complex conductivity EIT and ultrasound modulated EIT
with continuous boundary data.
We consider a bounded imaging domain $\Omega\subset \R^n$, $n\geq 2$ with piecewise smooth boundary.
For $x\in \Omega$, let
\[
\gamma_\omega(x)=\sigma_\omega(x)+\im \omega \epsilon_\omega(x)
\]
denote the body's complex admittivity at frequency $\omega\geq 0$.
We assume that
\[
\Re (\gamma_\omega)=\sigma_\omega \in L_+^\infty(\Omega;\R), \quad \mbox{ and } \quad \Im (\gamma_\omega)=\omega\epsilon_\omega \in L^\infty(\Omega;\R),
\]
where
$\Re(\cdot)$ and $\Im(\cdot)$ denote the real and imaginary part, the subscript ``$+$'' indicates
functions with positive (essential) infima, and throughout this work all function spaces consist of complex valued
functions if not stated otherwise.

Complex EIT measurements consist of applying time-harmonic currents to the surface of the imaging domain
and measuring the resulting electric surface potential.
In the so-called continuum model (see, e.g., \cite{Che99}),
these measurements are described by the Neumann-to-Dirichlet-Operator
\[
\Lambda(\gamma_\omega):\; \Ld^2(\partial \Omega)\to \Ld^2(\partial \Omega), \quad g\mapsto u_{\gamma_\omega}^{(g)}|_{\partial \Omega},
\]
where $u_{\gamma_\omega}^{(g)}\in \Hd^1(\Omega)$ solves
\begin{equation}
\labeq{EIT}
\nabla \cdot \left(\gamma_\omega \nabla u_{\gamma_\omega}^{(g)}\right) = 0\mbox{ in } \Omega \quad \mbox{ and } \quad
\gamma_\omega \partial _\nu u_{\gamma_\omega}^{(g)}|_{\partial \Omega}=g.
\end{equation}
Here, the subspace of $L^2(\partial \Omega)$ and $H^1(\Omega)$-functions with vanishing integral mean
on $\partial \Omega$ is denoted by $\Ld^2(\partial \Omega)$ and $\Hd^1(\Omega)$, respectively.
$\nu$ is the outer normal on $\partial \Omega$.
It is well known that $\Lambda(\gamma_\omega)$ is a well-defined, linear and compact operator.

The idea of ultrasound-modulated EIT is to focus an ultrasound wave on a small part $B\subseteq \Omega$
in order to change the density of the material and thus its conductivity in $B$, cf.\ \cite{ammari2008electrical}.
A simple, very idealistic model is that the focused ultrasound wave changes the conductivity from
$\gamma_\omega$ to $\gamma_\omega(1+\beta \chi_B)$, where $\beta>0$ depends on the strength of the ultrasound wave and $\chi_B$ is the characteristic function of $B$.
Hence, ultrasound-modulated EIT measurements can be modeled as
\[
\Lambda(\gamma_\omega (1+\beta \chi_B)).
\]

In this work, we will compare measurements at a non-zero frequency $\Lambda(\gamma_\omega)$, $\omega>0$,
with ultrasound-modulated DC measurements $\Lambda(\gamma_0 (1+\beta \chi_B))$ in order to detect whether
the ultrasound modulated part $B$ lies inside a conductivity anomaly or not.

\subsection{Monotonicity results for the continuous case}

We will compare measurements in the sense of operator definiteness. Given a
bounded linear operator $A:\ \Ld^2(\partial \Omega)\to \Ld^2(\partial \Omega)$, we define its self-adjoint part by setting
\[
\Re(A):=\frac{1}{2}(A+A^*)
\]
where $A^*:\ \Ld^2(\partial \Omega)\to \Ld^2(\partial \Omega)$ is the adjoint of $A$ with respect to the inner product
of $\Ld^2(\partial \Omega)$, i.e.,
\[
\int_{\partial \Omega} \overline{g} (A h)\dx[s] = \int_{\partial \Omega} (\overline{A^*g}) h\dx[s] \quad
\mbox{ for all } g,h\in \Ld^2(\partial \Omega).
\]
Obviously, $\Re(A)$ is a self-adjoint bounded linear operator.

For two self-adjoint bounded linear operators $A,B:\ \Ld^2(\partial \Omega)\to \Ld^2(\partial \Omega)$, we write $A\leq B$ if
$B-A$ is positive semidefinite, i.e.
\[
\int_{\partial \Omega} \overline{g} A g\dx[s]\leq \int_{\partial \Omega} \overline{g} B g\dx[s] \quad \forall g\in \Ld^2(\partial \Omega).
\]
For compact operators, this is equivalent to the fact that all eigenvalues of $B-A$ are non-negative.

Note that, for all $g,h\in \Ld^2(\partial \Omega)$, the Neumann-to-Dirichlet-operator $\Lambda(\gamma_\omega)$ satisfies
\begin{eqnarray*}
\int_{\partial \Omega} \overline{g} \Lambda(\gamma_\omega) h\dx[s]
=\int_{\partial \Omega} \overline{g} u_{\gamma_\omega}^{(h)}|_{\partial \Omega} \dx[s]
= \int_\Omega \overline{\gamma_\omega \nabla u_{\gamma_\omega}^{(g)}} \cdot \nabla u_{\gamma_\omega}^{(h)},\\
\int_{\partial \Omega} g \Lambda(\gamma_\omega) h\dx[s]
= \int_\Omega \gamma_\omega \nabla u_{\gamma_\omega}^{(g)} \cdot \nabla u_{\gamma_\omega}^{(h)}
=\int_{\partial \Omega} h \Lambda(\gamma_\omega) g\dx[s].
\end{eqnarray*}
In that sense, $\Lambda(\gamma_\omega)$ is symmetric but generally (for complex $\gamma_\omega$) not self-adjoint.

In simple two-point conductivity measurement setups, there exists an obvious monotonicity relation. Given a larger conductivity we
will require less voltage to drive the same current. Remarkably, this monotonicity relation extends to the case of continuous boundary
measurements. For real-valued conductivity functions $\sigma_1,\sigma_2\in L^\infty_+(\Omega;\R)$ we have that, for all $g\in \Ld^2(\partial \Omega)$,
\begin{eqnarray}
\nonumber \int_{\Omega}  \frac{\sigma_2}{\sigma_1} (\sigma_1-\sigma_2) \left|\nabla u_{\sigma_2}^{(g)}\right|^2 \dx\\
\leq \int_{\partial \Omega} g \left( \Lambda(\sigma_2) - \Lambda(\sigma_1) \right) g \dx[s]
\labeq{real_mono}\leq \int_{\Omega} (\sigma_1-\sigma_2) \left|\nabla u_{\sigma_2}^{(g)}\right|^2 \dx,
\end{eqnarray}
where $u_{\sigma_2}^{(g)}$ solves the EIT equation \req{EIT} with conductivity $\sigma_2$ and boundary currents $g$. Hence,
\[
\sigma_1\leq \sigma_2 \quad \mbox{ implies that } \quad \Lambda(\sigma_1)\geq \Lambda(\sigma_2),
\]
so that an imaging domain with larger conductivity yields to smaller measurements in the sense of operator definiteness.
The monotonicity relation \req{real_mono} goes back to Ikehata, Kang, Seo, and Sheen \cite{kang1997inverse,ikehata1998size}.
It is the basis of many results on inclusion detection in EIT, cf.\ \cite{kirsch2005factorization,ide2007probing,harrach2009detecting,harrach2010exact,harrach2010factorization,harrach2013monotonicity,harrach2013recent}.

The following lemma extends the relation \req{real_mono} to complex-valued conductivities (see also \cite{kirsch2005factorization,harrach2009detecting,harrach2010factorization} for similar results).
\begin{lemma}\label{lemma:monotonicity}
Let $\gamma_1,\gamma_2\in L_+^\infty(\Omega;\R)+\im L^\infty(\Omega;\R)$, $g\in \Ld^2(\partial \Omega)$,
and  $u_{\gamma_1}^{(g)},u_{\gamma_2}^{(g)}\in \Hd^1(\Omega)$ be the corresponding solutions of \req{EIT}.
Then
\begin{eqnarray*}
\int_{\Omega} \left( \frac{\Re(\gamma_2)}{\Re(\gamma_1)} \Re(\gamma_1-\gamma_2) - \frac{\Im(\gamma_2)^2}{\Re(\gamma_1)}\right) \left|\nabla u_{\gamma_2}^{(g)}\right|^2 \dx\\
\leq  \int_{\partial \Omega} \overline{g} \, \Re\left[ \Lambda(\gamma_2) -  \Lambda(\gamma_1) \right] g \dx[s]
\leq \int_{\Omega} \left( \Re(\gamma_1-\gamma_2) + \frac{\Im(\gamma_1)^2}{\Re(\gamma_1)}\right) \left|\nabla u_{\gamma_2}^{(g)}\right|^2 \dx.
\end{eqnarray*}
\end{lemma}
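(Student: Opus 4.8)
\emph{Proof sketch.}
The plan is to turn the assertion into a comparison of two real Dirichlet energies and then exploit the freedom in the weak formulation. Write $u_j:=u_{\gamma_j}^{(g)}$ and $\sigma_j:=\Re(\gamma_j)$, and recall that $\int_{\partial\Omega}\overline g\,\Re[A]g\dx[s]=\Re\int_{\partial\Omega}\overline g\,Ag\dx[s]$ for every bounded linear operator $A$ on $\Ld^2(\partial\Omega)$ (because $\int_{\partial\Omega}\overline g\,A^{*}g\dx[s]=\overline{\int_{\partial\Omega}\overline g\,Ag\dx[s]}$). Applying the sesquilinear identity from the excerpt with $h=g$ gives $\int_{\partial\Omega}\overline g\,\Lambda(\gamma_j)g\dx[s]=\int_\Omega\overline{\gamma_j}\,|\nabla u_j|^2\dx$, whence
\[
D:=\int_{\partial\Omega}\overline g\,\Re[\Lambda(\gamma_2)-\Lambda(\gamma_1)]\,g\dx[s]=\int_\Omega\sigma_2|\nabla u_2|^2\dx-\int_\Omega\sigma_1|\nabla u_1|^2\dx .
\]
So both estimates reduce to controlling $\int_\Omega\sigma_1|\nabla u_1|^2$ in terms of $u_2$: a lower bound on this energy yields the upper bound of the lemma, and an upper bound on it yields the lower bound. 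I would use throughout the weak formulation $\int_\Omega\gamma_j\nabla u_j\cdot\nabla\overline w\dx=\int_{\partial\Omega}g\overline w\dx[s]$ for all $w\in\Hd^1(\Omega)$, together with its difference form, obtained by subtracting the two equations: with $e:=u_1-u_2$,
\[
\int_\Omega\gamma_1\nabla e\cdot\nabla\overline w\dx=\int_\Omega(\gamma_2-\gamma_1)\nabla u_2\cdot\nabla\overline w\dx\qquad\text{for all }w\in\Hd^1(\Omega).
\]

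For the \emph{upper bound} I would first insert $\nabla u_1=\nabla u_2+\nabla e$ into $D$ to obtain $D=\int_\Omega\Re(\gamma_2-\gamma_1)|\nabla u_2|^2\dx-2\int_\Omega\sigma_1\Re(\nabla u_2\cdot\overline{\nabla e})\dx-\int_\Omega\sigma_1|\nabla e|^2\dx$, and then eliminate the middle integral: testing the difference form with $w=u_2$, conjugating, and taking the real part yields $\int_\Omega\sigma_1\Re(\nabla u_2\cdot\overline{\nabla e})\dx=\int_\Omega\Re(\gamma_2-\gamma_1)|\nabla u_2|^2\dx-\int_\Omega\Im(\gamma_1)\Im(\nabla u_2\cdot\overline{\nabla e})\dx$. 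Substituting this back collapses $D$ to
\[
D=\int_\Omega\Re(\gamma_1-\gamma_2)|\nabla u_2|^2\dx+\int_\Omega\left(2\Im(\gamma_1)\Im(\nabla u_2\cdot\overline{\nabla e})-\sigma_1|\nabla e|^2\right)\dx .
\]
Since $\left|\Im(\nabla u_2\cdot\overline{\nabla e})\right|\le|\nabla u_2|\,|\nabla e|$ pointwise, Young's inequality bounds the second integrand pointwise by $\frac{\Im(\gamma_1)^2}{\sigma_1}|\nabla u_2|^2$, which is precisely the claimed upper bound.

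For the \emph{lower bound} the clean route avoids $e$ altogether and instead tests \emph{both} weak formulations with $w=u_1$: each right-hand side equals $\int_{\partial\Omega}g\overline{u_1}\dx[s]$, so $\int_\Omega\gamma_1|\nabla u_1|^2\dx=\int_\Omega\gamma_2\nabla u_2\cdot\nabla\overline{u_1}\dx$, and taking real parts, $\int_\Omega\sigma_1|\nabla u_1|^2\dx=\Re\int_\Omega\gamma_2\nabla u_2\cdot\nabla\overline{u_1}\dx\le\int_\Omega|\gamma_2|\,|\nabla u_2|\,|\nabla u_1|\dx$. A weighted Cauchy--Schwarz in $L^2(\Omega)$, splitting $|\gamma_2|\,|\nabla u_2|\,|\nabla u_1|=\left(|\gamma_2|\,|\nabla u_2|/\sqrt{\sigma_1}\right)\left(\sqrt{\sigma_1}\,|\nabla u_1|\right)$, then gives $\int_\Omega\sigma_1|\nabla u_1|^2\dx\le\int_\Omega\frac{|\gamma_2|^2}{\sigma_1}|\nabla u_2|^2\dx$ (after dividing, or trivially if this energy vanishes), so that
\[
D\ge\int_\Omega\left(\sigma_2-\frac{|\gamma_2|^2}{\sigma_1}\right)|\nabla u_2|^2\dx=\int_\Omega\left(\frac{\Re(\gamma_2)}{\Re(\gamma_1)}\Re(\gamma_1-\gamma_2)-\frac{\Im(\gamma_2)^2}{\Re(\gamma_1)}\right)|\nabla u_2|^2\dx ,
\]
using $\sigma_1\sigma_2-|\gamma_2|^2=\Re(\gamma_2)\Re(\gamma_1-\gamma_2)-\Im(\gamma_2)^2$. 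Both computations are short once the right test functions are identified, so the only genuine obstacle is organisational: the upper bound needs the somewhat fiddly elimination of $\Re(\nabla u_2\cdot\overline{\nabla e})$ through the $w=u_2$ identity, and the lower bound needs the (perhaps less obvious) idea of routing through $\int_\Omega\gamma_2\nabla u_2\cdot\nabla\overline{u_1}$ rather than trying to estimate $\int_\Omega\sigma_1|\nabla e|^2$ directly, which would lead to a considerably messier calculation.
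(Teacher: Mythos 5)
Your argument is correct. It rests on exactly the same two ingredients as the paper's proof: the identity $\int_{\partial\Omega}\overline g\,\Re[\Lambda(\gamma_2)-\Lambda(\gamma_1)]g\dx[s]=\int_\Omega\Re(\gamma_2)|\nabla u_2|^2\dx-\int_\Omega\Re(\gamma_1)|\nabla u_1|^2\dx$ together with the two symmetry relations $\int_\Omega\gamma_1|\nabla u_1|^2\dx=\int_\Omega\gamma_2\nabla u_2\cdot\overline{\nabla u_1}\dx$ and $\int_\Omega\gamma_2|\nabla u_2|^2\dx=\int_\Omega\gamma_1\nabla u_1\cdot\overline{\nabla u_2}\dx$, which the paper isolates in an auxiliary lemma (lemma \ref{lemma:monotonicity_aux}) precisely so that the same computation can be reused verbatim for the electrode measurement matrix in section \ref{Sec:electrodes}. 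Your lower bound is the Cauchy--Schwarz reformulation of the paper's first completed square $0\le\int_\Omega\Re(\gamma_1)\bigl|\nabla u_1-\frac{\gamma_2}{\Re(\gamma_1)}\nabla u_2\bigr|^2$, so it is the same argument in different clothing. Your upper bound is organized differently: the paper completes the square $0\le\int_\Omega\Re(\gamma_1)\bigl|\nabla u_1-\frac{\overline{\gamma_1}}{\Re(\gamma_1)}\nabla u_2\bigr|^2$ and invokes the second symmetry relation, whereas you route through $e=u_1-u_2$, the difference weak form tested with $u_2$, and a pointwise Young inequality. The two computations are algebraically equivalent (your Young step is the hidden completed square), but the paper's version is slightly shorter and, more importantly, phrased so that it applies to any pair $(u_1,u_2)$ satisfying the two integral identities; if you intend to reuse your argument for lemma \ref{lemma:monotonicity_shunt}, you would want to likewise abstract it away from the continuum Neumann problem, since the shunt-model solutions satisfy the same identities but a different boundary value problem.
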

The proof of lemma \ref{lemma:monotonicity} is postponed to the end of this section.

\subsection{Detecting inclusions in the continuous case}\label{subsect:detect_cont}

We assume that the imaging domain $\Omega$ consists of a homogeneous background medium with one or several conductivity anomalies
(inclusions) $D$. For simplicity, we will present our result for the case that the anomalies possess a constant admittivity
and that the conductivity $\sigma_\omega$ and the permittivity $\epsilon_\omega$ do not change with frequency. More precisely,
we assume that $D\subset \Omega$ is a closed set with connected complement and that
$\gamma_0$ and $\gamma_\omega$ are given by
\begin{eqnarray}
\labeq{gamma_0} \gamma_0(x)=\left\{ \begin{array}{l l} \gamma_0^{(\Omega)}=\sigma_\Omega & \mbox{ for } x\in \Omega\setminus D\\
\gamma_0^{(D)}=\sigma_D & \mbox{ for } x\in D \end{array}\right.\\
\labeq{gamma_omega} \gamma_\omega(x)=\left\{ \begin{array}{l l} \gamma_\omega^{(\Omega)}=
\sigma_\Omega+\im \omega \epsilon_\Omega & \mbox{ for } x\in \Omega\setminus D\\
\gamma_\omega^{(D)}=
\sigma_D+\im \omega \epsilon_D & \mbox{ for } x\in D \end{array}\right.
\end{eqnarray}
with real-valued constants $\sigma_\Omega,\sigma_D,\epsilon_\Omega,\epsilon_D>0$. We also assume that the anomaly
fulfills
\begin{equation}
\labeq{contrast}
\epsilon_D \sigma_\Omega -   \epsilon_\Omega \sigma_D\neq 0,
\end{equation}
which is the contrast condition required to detect inclusion in weighted fdEIT, cf.\ \cite[Remark~2.3]{harrach2009detecting}.
Our results can easily be extended to inclusions of spatially varying and frequency-dependent admittivities as long as the background conductivities are constant.

The ratio of the background conductivities is denoted by
\begin{equation}
\labeq{alpha}
\alpha:=\frac{\gamma_\omega^{(\Omega)}}{\gamma_0^{(\Omega)}} =1+\im \omega \frac{\epsilon_\Omega}{\sigma_\Omega}.
\end{equation}
Obviously, $\alpha \Lambda(\gamma_\omega)=\Lambda(\gamma_\omega / \alpha)$.

We show that the anomaly $D$ can be detected from comparing (ratio-weighted) EIT measurements at a non-zero frequency $\omega>0$ with ultrasound-modulated DC measurements, i.e. that we can detect $D$ from knowledge of $\Lambda(\gamma_\omega)$,
$\Lambda(\gamma_0 (1+\beta \chi_B))$, and the background ratio $\alpha$. (Note that, the background ratio $\alpha$ could also be estimated by additionally taking unmodulated DC measurements $\Lambda(\gamma_0)$ and comparing them with $\Lambda(\gamma_\omega)$ in the same way as in weighted fdEIT, cf.\ \cite{seo2008frequency,harrach2009detecting,harrach2010factorization}.)

\begin{thm}\label{thm:main_cont}
Let $c:=\epsilon_D \sigma_\Omega -\epsilon_\Omega\sigma_D\neq 0$. 
\begin{enumerate}[(a)]
\item If $c>0$, then for sufficiently small $\beta>0$ and every open set $B\subseteq \Omega$, 
\begin{equation}
 B\subseteq D \quad \mbox{ if and only if } \quad
\Re\left(\alpha \Lambda( \gamma_\omega) \right)\leq \Lambda((1+\beta\chi_B)\gamma_0).
\end{equation}
\item If $c<0$, then for sufficiently small $\beta>0$ and every open set $B\subseteq \Omega$, 
\begin{equation}
 B\subseteq D \quad \mbox{ if and only if } \quad
\Re\left(\alpha \Lambda( \gamma_\omega) \right) \geq \Lambda((1-\beta\chi_B)\gamma_0).
\end{equation}
\end{enumerate}
The modulation strength $\beta>0$ is sufficiently small if
\[
\beta \leq \left\{ \begin{array}{l l} 
\omega^2 |c| \frac{\epsilon_\Omega}{\sigma_D(\sigma_\Omega^2 + \omega^2 \epsilon_\Omega^2)} & \mbox{ in case (a)},\\
\omega^2 |c|   \frac{\epsilon_D}{\sigma_D (\sigma_D \sigma_\Omega + \omega^2 \epsilon_D \epsilon_\Omega)}
& \mbox{ in case (b)}.
\end{array}\right.
\]
\end{thm}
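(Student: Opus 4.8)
\noindent\emph{Sketch of the intended proof.}\ The plan is to reduce both equivalences to a comparison of two Neumann-to-Dirichlet operators and then to read off the sign from Lemma~\ref{lemma:monotonicity}. Since $\alpha\Lambda(\gamma_\omega)=\Lambda(\gamma_\omega/\alpha)$, it suffices to compare $\Lambda(\gamma_1)$ and $\Lambda(\gamma_2)$ for the coefficients $\gamma_1:=\gamma_\omega/\alpha$ and the real-valued coefficient $\gamma_2:=(1\pm\beta\chi_B)\gamma_0$. First I would compute from \req{gamma_0}, \req{gamma_omega} and \req{alpha} that $\gamma_1=\sigma_\Omega=\gamma_0$ on $\Omega\setminus D$, while on $D$
\[ \Re(\gamma_1)=\sigma_D+\frac{\omega^2\epsilon_\Omega\,c}{\sigma_\Omega^2+\omega^2\epsilon_\Omega^2},\qquad \Im(\gamma_1)=\frac{\omega\sigma_\Omega\,c}{\sigma_\Omega^2+\omega^2\epsilon_\Omega^2}. \]
Hence on $D$ the real part of $\gamma_1$ lies strictly above the unmodulated background value $\sigma_D$ of $\gamma_0$ when $c>0$, and strictly below it when $c<0$; this contrast is the mechanism that makes $D$ visible, and --- together with the sign asymmetry of the two bounds in Lemma~\ref{lemma:monotonicity} --- it dictates whether one uses the modulation $1+\beta\chi_B$ or $1-\beta\chi_B$.

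Next I would prove the implication ``$B\subseteq D\Rightarrow$ operator inequality''. Then $\chi_B$ vanishes on $\Omega\setminus D$. In case (a) I apply the \emph{lower} bound of Lemma~\ref{lemma:monotonicity} with $\gamma_1,\gamma_2$ as above; since $\gamma_2$ is real its integrand equals $\frac{\Re(\gamma_2)}{\Re(\gamma_1)}\Re(\gamma_1-\gamma_2)\,|\nabla u_{\gamma_2}^{(g)}|^2$, which is $\ge0$ as soon as $\Re(\gamma_1)\ge\Re(\gamma_2)$ a.e. This holds because on $\Omega\setminus D$ both equal $\sigma_\Omega$, on $D\setminus B$ one has $\Re(\gamma_1)-\sigma_D=\omega^2\epsilon_\Omega c/(\sigma_\Omega^2+\omega^2\epsilon_\Omega^2)>0$, and on $B$ the requirement $\Re(\gamma_1)\ge(1+\beta)\sigma_D$ is precisely the stated bound on $\beta$; this yields $\Re(\Lambda(\gamma_1))\le\Lambda(\gamma_2)$. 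In case (b) the lower bound is inconclusive (on $D\setminus B$ one has $\Re(\gamma_1)<\sigma_D=\gamma_2$), so I instead use the \emph{upper} bound of Lemma~\ref{lemma:monotonicity}, whose integrand is $\bigl(\Re(\gamma_1-\gamma_2)+\Im(\gamma_1)^2/\Re(\gamma_1)\bigr)|\nabla u_{\gamma_2}^{(g)}|^2$, and I have to check that it is $\le0$ a.e.: it vanishes on $\Omega\setminus D$, and on $D$ one simplifies
\[ \Re(\gamma_1-\sigma_D)+\frac{\Im(\gamma_1)^2}{\Re(\gamma_1)}=\frac{\omega^2\epsilon_D\sigma_\Omega\,c}{\Re(\gamma_1)\,(\sigma_\Omega^2+\omega^2\epsilon_\Omega^2)}<0, \]
so the integrand is negative on $D\setminus B$, and on $B$ the extra term $\beta\sigma_D$ keeps it $\le0$ exactly under the case-(b) threshold on $\beta$; this yields $\Re(\Lambda(\gamma_1))\ge\Lambda(\gamma_2)$.

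For the converse implications I would argue by contraposition. If $B\not\subseteq D$ then $W:=B\setminus D$ is a non-empty open subset of $\Omega\setminus D$, and I fix an open $W'$ with $\overline{W'}\subset W$. On $\Omega\setminus D$ we have $\gamma_1=\sigma_\Omega$ and $\gamma_2=(1\pm\beta\chi_B)\sigma_\Omega$, so the relevant one-sided estimate of Lemma~\ref{lemma:monotonicity} (the upper bound in case (a), the lower bound in case (b)) has an integrand which, on $W$, equals a strictly negative (resp.\ strictly positive) multiple of $|\nabla u_{\gamma_2}^{(g)}|^2$, vanishes on $(\Omega\setminus D)\setminus B$, and --- all involved coefficients being piecewise constant --- is bounded above (resp.\ below) by a fixed constant times $|\nabla u_{\gamma_2}^{(g)}|^2$ on $D$. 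Since $\gamma_2$ is a fixed real conductivity with positive infimum and, by assumption, $D$ is closed with connected complement in the bounded domain $\Omega$, the theory of localized potentials \cite{gebauer2008localized} provides a sequence $(g_k)\subset\Ld^2(\partial\Omega)$ with $\int_{W'}|\nabla u_{\gamma_2}^{(g_k)}|^2\dx\to\infty$ and $\int_D|\nabla u_{\gamma_2}^{(g_k)}|^2\dx\to0$. Inserting $g_k$ drives the bound on $\int_{\partial\Omega}\overline{g_k}\,\Re[\Lambda(\gamma_2)-\Lambda(\gamma_1)]g_k\dx[s]$ to $-\infty$ in case (a) and to $+\infty$ in case (b), so the claimed operator inequality must fail. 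Together with the previous step this gives both equivalences.

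The contraposition step is a routine localized-potentials argument in the spirit of the monotonicity method \cite{harrach2013monotonicity}; the only point requiring care there is the closed-with-connected-complement assumption on $D$, which guarantees the existence of the localizing sequence. I expect the main obstacle to be the implication ``$B\subseteq D\Rightarrow$ inequality'' in case (b): there $\Im(\gamma_1)$ does not vanish on $D$, so the monotonicity estimate carries the quadratic term $\Im(\gamma_1)^2/\Re(\gamma_1)$, and one must verify that the competition between this term --- a genuine effect of the frequency change --- and the modulation term $\beta\sigma_D$ leaves the relevant integrand one-signed; it is precisely this balance that produces the explicit threshold on $\beta$ stated in the theorem. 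Case (a) is comparatively easy because the corresponding bound in Lemma~\ref{lemma:monotonicity} contains $\Im(\gamma_2)=0$ in place of $\Im(\gamma_1)$.
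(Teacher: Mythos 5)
Your argument is correct, and the forward implications ($B\subseteq D$ implies the operator inequality) coincide with the paper's proof: the same choice of which one-sided bound from lemma~\ref{lemma:monotonicity} to use in each case ($\gamma_2$ real kills the $\Im(\gamma_2)^2$ term in the lower bound for $c>0$; the $\Im(\gamma_1)^2/\Re(\gamma_1)$ term in the upper bound produces exactly $\epsilon_D C$ for $c<0$), and the same resulting thresholds on $\beta$. Where you genuinely diverge is the converse. You apply the relevant one-sided estimate directly to the pair $\gamma_1=\gamma_\omega/\alpha$, $\gamma_2=(1\pm\beta\chi_B)\gamma_0$ and then localize potentials for the \emph{modulated} conductivity $\gamma_2$. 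The paper instead inserts the intermediate operator $\Lambda(\gamma_0)$, uses two separate monotonicity estimates (one comparing $\gamma_\omega/\alpha$ with $\gamma_0$, one comparing $(1\pm\beta\chi_B)\gamma_0$ with $\gamma_0$), and localizes potentials for the \emph{unmodulated} $\gamma_0$. The practical difference: $\gamma_0$ is constant on $\Omega\setminus D$, so the paper can quote the two-step construction of \cite{harrach2013monotonicity} (localized potentials for the Laplacian plus the transfer lemma for conductivities constant outside $D$); your $\gamma_2$ jumps across $\partial B\setminus D$, so that transfer lemma does not apply and you must invoke the general localized-potentials theorem of \cite{gebauer2008localized} for arbitrary real $L^\infty_+$ conductivities. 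That theorem does cover your situation (with $\overline{W'}\cap D=\emptyset$ and $\Omega\setminus D$ connected), so your route is valid; it is marginally shorter but rests on a stronger citation, whereas the paper's splitting keeps the localization entirely in the well-charted constant-background setting. Two small points you should make explicit: in case (b) one needs $\beta<1$ so that $(1-\beta\chi_B)\gamma_0$ remains a positive conductivity (this also guarantees your "strictly positive multiple" $(1-\beta)\beta\sigma_\Omega$ on $W$), and the bound on the integrand over $D$ should be stated with a sign-correct constant since the coefficient there can change sign on $D\cap B$ --- neither affects the conclusion.
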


Theorem~\ref{thm:main_cont} shows that, for sufficiently small modulation strengths,
the ultrasound-modulated DC measurements are larger ($c>0$), resp., smaller ($c<0$) than
(the self-adjoint part of ratio-weighted) measurements taken at a non-zero frequency if and only if the focusing region
lies inside the unknown inclusion $D$. The terms larger and smaller are to be understood in the sense of operator definiteness.

\begin{remark}\label{rem:stable}
The monotonicity tests in \ref{thm:main_cont} are stable in the following sense (cf.\ \cite[remark 3.5]{harrach2013monotonicity}). Let $A^\delta$ be a (w.l.o.g.\ self-adjoint) approximation to
the compact and self-adjoint operator
$A$, 
\[
\norm{ A^\delta - A}_{\mathcal{L}(\Ld^2(\partial \Omega))}<\delta,
\]
where $A:=\Lambda((1+\beta\chi_B)\gamma_0) - \Re\left(\alpha \Lambda( \gamma_\omega) \right)$ in
case (a) of theorem \ref{thm:main_cont} and 
$A:=\Re\left(\alpha \Lambda( \gamma_\omega) \right)-\Lambda((1-\beta\chi_B)\gamma_0)$ in case (b).

We consider the regularized definiteness test 
\begin{equation}\labeq{reg_def}
A^\delta\geq -\delta I.
\end{equation}
If $A\geq 0$, then $A^\delta\geq -\delta I$ will be fulfilled. On the other hand, if $A\not \geq 0$, then $A$ must possess a negative eigenvalue $\lambda<0$, so that $A^\delta\not\geq -\delta I$ for all $\delta<-\frac{\lambda}{2}$.

Hence, in order to determine whether a given focusing region lies inside the unknown inclusion,
it suffices to know the measurements up to a certain precision level $\delta>0$.
In that sense, also our arguably idealistic modeling of a perfectly focused ultrasound beam only has to be approximately valid.
\end{remark}

\subsection{Proof of lemma~\ref{lemma:monotonicity} and theorem~\ref{thm:main_cont}}

Our proof of theorem~\ref{thm:main_cont} relies on the monotonicity relation for complex conductivity EIT in
lemma~\ref{lemma:monotonicity} and the concept of localized potentials developed by one of the
authors in \cite{gebauer2008localized}. To prove lemma~\ref{lemma:monotonicity}, we will first show the
following auxiliary result that will also be useful for the case of electrode measurements.

\begin{lemma}\label{lemma:monotonicity_aux}
Let $\gamma_1,\gamma_2\in L_+^\infty(\Omega;\R)+\im L^\infty(\Omega;\R)$, $g\in \Ld^2(\partial \Omega)$,
and $u_1,u_2\in H^1(\Omega)$ fulfill
\begin{eqnarray*}
\int_\Omega \gamma_1 |\nabla u_1|^2 \dx= \int_\Omega \gamma_2 \nabla u_2 \cdot \overline{\nabla u_1} \dx,\\
\int_\Omega \gamma_2 |\nabla u_2|^2 \dx= \int_\Omega \gamma_1 \nabla u_1 \cdot \overline{\nabla u_2} \dx.
\end{eqnarray*}
Then
\begin{eqnarray*}
\int_{\Omega} \left( \frac{\Re(\gamma_2)}{\Re(\gamma_1)} \Re(\gamma_1-\gamma_2) - \frac{\Im(\gamma_2)^2}{\Re(\gamma_1)}\right) |\nabla u_2|^2 \dx\\
\leq  \int_\Omega \Re (\gamma_2) |\nabla u_2|^2 \dx - \int_\Omega \Re (\gamma_1) |\nabla u_1|^2 \dx\\
\leq \int_{\Omega} \left( \Re(\gamma_1-\gamma_2) + \frac{\Im(\gamma_1)^2}{\Re(\gamma_1)}\right) |\nabla u_2|^2 \dx.
\end{eqnarray*}
\end{lemma}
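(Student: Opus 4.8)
The plan is to derive both inequalities from the single elementary fact that $\int_\Omega\Re(\gamma_1)\,|v|^2\dx\geq0$ for every complex vector field $v$ (recall that $\Re(\gamma_1)$ has a positive essential infimum), applied to two carefully chosen linear combinations of $\nabla u_1$ and $\nabla u_2$, together with the two coupling identities in the hypothesis and the pointwise algebraic identity $\frac{|\gamma|^2}{\Re(\gamma)}=\Re(\gamma)+\frac{\Im(\gamma)^2}{\Re(\gamma)}$, which holds whenever $\Re(\gamma)>0$.

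First I would rewrite the claim in a cleaner form. Since $|\nabla u_i|^2\geq0$ is real, $\int_\Omega\Re(\gamma_i)|\nabla u_i|^2\dx=\Re\int_\Omega\gamma_i|\nabla u_i|^2\dx$, so the two hypotheses say precisely that
\[
\Re\int_\Omega\gamma_2\,\nabla u_2\cdot\overline{\nabla u_1}\dx=\int_\Omega\Re(\gamma_1)|\nabla u_1|^2\dx\qquad\mbox{and}\qquad\Re\int_\Omega\gamma_1\,\nabla u_1\cdot\overline{\nabla u_2}\dx=\int_\Omega\Re(\gamma_2)|\nabla u_2|^2\dx.
\]
Using the algebraic identity above, the asserted lower bound is equivalent to $\int_\Omega\Re(\gamma_1)|\nabla u_1|^2\dx\leq\int_\Omega\frac{|\gamma_2|^2}{\Re(\gamma_1)}|\nabla u_2|^2\dx$, and the asserted upper bound is equivalent to $2\int_\Omega\Re(\gamma_2)|\nabla u_2|^2\dx\leq\int_\Omega\Re(\gamma_1)|\nabla u_1|^2\dx+\int_\Omega\frac{|\gamma_1|^2}{\Re(\gamma_1)}|\nabla u_2|^2\dx$; so it suffices to prove these two inequalities.

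For the first I would expand the nonnegative quantity $\int_\Omega\Re(\gamma_1)\left|\frac{\gamma_2}{\Re(\gamma_1)}\nabla u_2-\nabla u_1\right|^2\dx\geq0$: its three terms are $\int_\Omega\frac{|\gamma_2|^2}{\Re(\gamma_1)}|\nabla u_2|^2\dx$, $\int_\Omega\Re(\gamma_1)|\nabla u_1|^2\dx$, and the cross term $-2\Re\int_\Omega\gamma_2\,\nabla u_2\cdot\overline{\nabla u_1}\dx$, which by the first coupling identity equals $-2\int_\Omega\Re(\gamma_1)|\nabla u_1|^2\dx$; after cancellation this is exactly the first inequality. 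For the second I would similarly expand $\int_\Omega\Re(\gamma_1)\left|\nabla u_1-\frac{\overline{\gamma_1}}{\Re(\gamma_1)}\nabla u_2\right|^2\dx\geq0$: here the cross term is $-2\Re\int_\Omega\gamma_1\,\nabla u_1\cdot\overline{\nabla u_2}\dx$, which by the second coupling identity equals $-2\int_\Omega\Re(\gamma_2)|\nabla u_2|^2\dx$, and rearranging yields the second inequality. (Lemma~\ref{lemma:monotonicity} then follows at once by taking $u_i=u_{\gamma_i}^{(g)}$, for which the weak formulation of \req{EIT} with the common Neumann datum $g$ supplies the two coupling identities, and using $\int_{\partial\Omega}\overline{g}\,\Re\left[\Lambda(\gamma_i)\right]g\dx[s]=\int_\Omega\Re(\gamma_i)|\nabla u_{\gamma_i}^{(g)}|^2\dx$.)

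The only point requiring genuine care is the bookkeeping with complex conjugates when expanding $|\lambda\nabla u_1+\mu\nabla u_2|^2$ for complex-valued weights: the weights $\gamma_2/\Re(\gamma_1)$ and $\overline{\gamma_1}/\Re(\gamma_1)$ are chosen precisely so that the cross term becomes a real multiple of $\Re\int_\Omega\gamma_2\,\nabla u_2\cdot\overline{\nabla u_1}\dx$, respectively $\Re\int_\Omega\gamma_1\,\nabla u_1\cdot\overline{\nabla u_2}\dx$, i.e.\ of the two quantities that the hypotheses actually determine; a different conjugation, such as $\gamma_1/\Re(\gamma_1)$ in place of $\overline{\gamma_1}/\Re(\gamma_1)$, would instead produce the uncontrolled term $\Re\int_\Omega\gamma_1\,\nabla u_2\cdot\overline{\nabla u_1}\dx$. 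Once the right squares are identified, everything else is routine algebra.
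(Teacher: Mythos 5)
Your proof is correct and follows essentially the same route as the paper: the same two nonnegative squares $\int_\Omega \Re(\gamma_1)\bigl|\nabla u_1 - \tfrac{\gamma_2}{\Re(\gamma_1)}\nabla u_2\bigr|^2$ and $\int_\Omega \Re(\gamma_1)\bigl|\nabla u_1 - \tfrac{\overline{\gamma_1}}{\Re(\gamma_1)}\nabla u_2\bigr|^2$, the same use of the two coupling identities to evaluate the cross terms, and the same closing algebra. The only difference is presentational (you first reduce each asserted bound to an equivalent simpler inequality), and your remark about why the weights $\gamma_2/\Re(\gamma_1)$ and $\overline{\gamma_1}/\Re(\gamma_1)$ are the right choices is accurate.
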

\begin{proof}
Since
\begin{eqnarray*}
\fl 0 \leq \int_{\Omega} \Re(\gamma_1)
\left| \nabla u_1 - \frac{\gamma_2}{\Re ( \gamma_1)} \nabla u_2 \right|^2\\
%
%
\fl \quad = \Re\left( \int_{\Omega} \gamma_1 | \nabla u_1|^2 \dx
- 2  \int_{\Omega} \gamma_2 \nabla u_2 \cdot \overline{\nabla u_1} \dx \right)
+ \int_{\Omega} \frac{|\gamma_2|^2}{\Re (\gamma_1)} |\nabla u_2|^2 \dx\\
\fl \quad = - \int_{\Omega} \Re(\gamma_1) | \nabla u_1|^2 \dx
+ \int_{\Omega} \frac{|\gamma_2|^2}{\Re (\gamma_1)} |\nabla u_2|^2 \dx\\
\fl \quad = \int_{\Omega} \Re(\gamma_2) | \nabla u_2|^2 \dx - \int_{\Omega} \Re(\gamma_1) | \nabla u_1|^2 \dx
+ \int_{\Omega} \left(\frac{|\gamma_2|^2}{\Re (\gamma_1)} - \Re(\gamma_2)\right) |\nabla u_2|^2 \dx,\\
\end{eqnarray*}
the first inequality follows from
\[
 \frac{|\gamma_2|^2}{\Re ( \gamma_1)} - \Re(\gamma_2)
= \frac{\Re(\gamma_2)^2+\Im (\gamma_2)^2}{\Re ( \gamma_1)} - \Re(\gamma_2)
= \frac{\Re(\gamma_2)}{\Re(\gamma_1)} \Re(\gamma_2-\gamma_1) + \frac{\Im(\gamma_2)^2}{\Re(\gamma_1)}.
\]
Likewise we obtain
\begin{eqnarray*}
\fl 0 \leq \int_{\Omega} \Re(\gamma_1)
\left| \nabla u_1 -\frac{\overline{\gamma_1}}{\Re ( \gamma_1)} \nabla u_2 \right|^2\\
\fl \quad = \int_{\Omega} \Re(\gamma_1) | \nabla u_1|^2 \dx
- 2 \Re \left( \int_{\Omega} \overline{\gamma_1} \nabla u_2 \cdot \overline{\nabla u_1} \dx \right)
+ \int_{\Omega} \frac{|\gamma_1|^2}{\Re ( \gamma_1)} |\nabla u_2|^2 \dx\\
%
%
\fl \quad = \int_{\Omega} \Re(\gamma_1) | \nabla u_1|^2 \dx
- \int_{\Omega} \Re(\gamma_2) | \nabla u_2|^2 \dx
+ \int_{\Omega} \left( \frac{|\gamma_1|^2}{\Re ( \gamma_1)} - \Re(\gamma_2)\right) |\nabla u_2|^2 \dx,
\end{eqnarray*}
so that the second inequality follows from
\[
\frac{|\gamma_1|^2}{\Re ( \gamma_1)} - \Re(\gamma_2)
= \frac{\Re(\gamma_1)^2+\Im(\gamma_1)^2}{\Re ( \gamma_1)} - \Re(\gamma_2)
= \Re(\gamma_1-\gamma_2) + \frac{\Im(\gamma_1)^2}{\Re ( \gamma_1)}.
\]
\end{proof}


We also require the following elementary computation:
\begin{lemma}\label{lemma:tedious_computation}
Let $\gamma_0,\gamma_\omega:\ \Omega\to \C$, and $\alpha\in \C$ be given
by \req{gamma_0},\req{gamma_omega}, and \req{alpha}. Then, for all $\tilde\beta\in\mathbb{R}$, 
\begin{eqnarray*}
 \frac{\Re(\gamma_0)}{\Re(\gamma_\omega/\alpha)} \Re(\gamma_\omega/\alpha-\gamma_0) = \left\{ \begin{array}{l l} 0 & \mbox{ in } \Omega\setminus D,\\
  \frac{\epsilon_\Omega \sigma_D}{\sigma_\Omega} C
 & \mbox{ in } D,
\end{array}\right.\\
\Re(\gamma_\omega/\alpha-\gamma_0) + \frac{\Im(\gamma_\omega/\alpha)^2}{\Re(\gamma_\omega/\alpha)} = \left\{ \begin{array}{l l} 0 & \mbox{ in } \Omega\setminus D,\\
 \epsilon_D C
 & \mbox{ in } D,
\end{array}\right.\\
\Re(\gamma_\omega/\alpha-(1+\tilde\beta\chi_B)\gamma_0) = \left\{ \begin{array}{l l} -\tilde\beta\sigma_\Omega\chi_B & \mbox{ in } \Omega\setminus D,\\
 \sigma_D\left(\frac{\epsilon_\Omega}{\sigma_\Omega}C'-\tilde\beta\chi_B\right)
 & \mbox{ in } D,
\end{array}\right.\\
\Re(\gamma_\omega/\alpha-(1+\tilde\beta\chi_B)\gamma_0) + \frac{\Im(\gamma_\omega/\alpha)^2}{\Re(\gamma_\omega/\alpha)} = \left\{ \begin{array}{l l} -\tilde\beta\sigma_\Omega\chi_B & \mbox{ in } \Omega\setminus D,\\
 \epsilon_D C-\tilde\beta\sigma_D\chi_B
 & \mbox{ in } D,
\end{array}\right.
\end{eqnarray*}
where
\begin{equation}\labeq{Def_C}
C:=\omega^2 \frac{\epsilon_D \sigma_\Omega -\epsilon_\Omega\sigma_D}{\sigma_D \sigma_\Omega + \omega^2 \epsilon_D \epsilon_\Omega}\quad \mbox{and}\quad C':=\omega^2 \frac{\sigma_\Omega}{\sigma_D}\cdot\frac{
\epsilon_D \sigma_\Omega -\epsilon_\Omega\sigma_D}{\sigma_\Omega^2 + \omega^2 \epsilon_\Omega^2}.
\end{equation}
\end{lemma}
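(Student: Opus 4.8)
The plan is to verify the four identities region by region, as a direct calculation that needs only the definition of $\alpha$ and one rationalization of $\gamma_\omega^{(D)}/\alpha$. On $\Omega\setminus D$ everything is immediate: by the definition \req{alpha} of $\alpha$ we have $\gamma_\omega/\alpha=\gamma_\omega^{(\Omega)}/\alpha=\gamma_0^{(\Omega)}=\sigma_\Omega$ there, which is real and equals $\gamma_0$, so $\Im(\gamma_\omega/\alpha)=0$ and $\gamma_\omega/\alpha-\gamma_0=0$ on $\Omega\setminus D$. This gives the claimed value $0$ in the first two lines, and since $\gamma_\omega/\alpha-(1+\tilde\beta\chi_B)\gamma_0=-\tilde\beta\chi_B\sigma_\Omega$ there, also the claimed $-\tilde\beta\sigma_\Omega\chi_B$ in the last two lines.

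On $D$ the only nontrivial ingredient is the real and imaginary part of
\[
\frac{\gamma_\omega^{(D)}}{\alpha}=\frac{\sigma_\Omega(\sigma_D+\im\omega\epsilon_D)}{\sigma_\Omega+\im\omega\epsilon_\Omega}
=\frac{\sigma_\Omega(\sigma_D\sigma_\Omega+\omega^2\epsilon_D\epsilon_\Omega)+\im\,\sigma_\Omega\omega(\epsilon_D\sigma_\Omega-\epsilon_\Omega\sigma_D)}{\sigma_\Omega^2+\omega^2\epsilon_\Omega^2},
\]
obtained by multiplying with $(\sigma_\Omega-\im\omega\epsilon_\Omega)/(\sigma_\Omega-\im\omega\epsilon_\Omega)$. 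Writing $c=\epsilon_D\sigma_\Omega-\epsilon_\Omega\sigma_D$, this reads $\Re(\gamma_\omega/\alpha)=\sigma_\Omega(\sigma_D\sigma_\Omega+\omega^2\epsilon_D\epsilon_\Omega)/(\sigma_\Omega^2+\omega^2\epsilon_\Omega^2)$ and $\Im(\gamma_\omega/\alpha)=\sigma_\Omega\omega c/(\sigma_\Omega^2+\omega^2\epsilon_\Omega^2)$ on $D$; subtracting $\Re(\gamma_0)=\sigma_D$ collapses the numerator to $\omega^2\epsilon_\Omega c$, so $\Re(\gamma_\omega/\alpha-\gamma_0)|_D=\omega^2\epsilon_\Omega c/(\sigma_\Omega^2+\omega^2\epsilon_\Omega^2)$, which by \req{Def_C} is exactly $\sigma_D\frac{\epsilon_\Omega}{\sigma_\Omega}C'$. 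Inserting the three quantities $\Re(\gamma_\omega/\alpha)$, $\Im(\gamma_\omega/\alpha)$, $\Re(\gamma_\omega/\alpha-\gamma_0)$ into the four left-hand sides and substituting the definitions \req{Def_C} of $C$ and $C'$ then produces the stated values after cancellation: for the first line one divides by $\Re(\gamma_\omega/\alpha)$ and the factor $\sigma_\Omega^2+\omega^2\epsilon_\Omega^2$ drops out, leaving $\frac{\epsilon_\Omega\sigma_D}{\sigma_\Omega}C$; the second line uses the algebraic identity $\epsilon_\Omega(\sigma_D\sigma_\Omega+\omega^2\epsilon_D\epsilon_\Omega)+\sigma_\Omega c=\epsilon_D(\sigma_\Omega^2+\omega^2\epsilon_\Omega^2)$, which turns $\Re(\gamma_\omega/\alpha-\gamma_0)+\Im(\gamma_\omega/\alpha)^2/\Re(\gamma_\omega/\alpha)$ into $\epsilon_D C$; and the last two lines are the first and second line shifted by $-\tilde\beta\chi_B\Re(\gamma_0)|_D=-\tilde\beta\sigma_D\chi_B$.

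I do not expect a genuine obstacle: the statement is a list of algebraic identities with no analytic content. The only thing to watch is bookkeeping — keeping the two recurring denominators $\sigma_\Omega^2+\omega^2\epsilon_\Omega^2$ and $\sigma_D\sigma_\Omega+\omega^2\epsilon_D\epsilon_\Omega$ in the forms appearing in \req{Def_C} so that $C$ and $C'$ are recognized — and it is cleanest to prove the master identity $\epsilon_\Omega(\sigma_D\sigma_\Omega+\omega^2\epsilon_D\epsilon_\Omega)+\sigma_\Omega c=\epsilon_D(\sigma_\Omega^2+\omega^2\epsilon_\Omega^2)$ once and reuse it, rather than redo the cancellation in each case.
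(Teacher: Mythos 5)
Your proposal is correct and follows essentially the same route as the paper: split into $\Omega\setminus D$ and $D$, rationalize $\gamma_\omega^{(D)}/\alpha$ against $\sigma_\Omega-\im\omega\epsilon_\Omega$, and reduce everything to the three quantities $\Re(\gamma_\omega/\alpha)$, $\Im(\gamma_\omega/\alpha)$, $\Re(\gamma_\omega/\alpha-\gamma_0)$; the paper's verification of the second identity performs exactly the cancellation you isolate as the master identity $\epsilon_\Omega(\sigma_D\sigma_\Omega+\omega^2\epsilon_D\epsilon_\Omega)+\sigma_\Omega c=\epsilon_D(\sigma_\Omega^2+\omega^2\epsilon_\Omega^2)$. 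One small labeling slip: the third displayed identity is the intermediate quantity $\Re(\gamma_\omega/\alpha-\gamma_0)=\sigma_D\frac{\epsilon_\Omega}{\sigma_\Omega}C'$ shifted by $-\tilde\beta\sigma_D\chi_B$, not the first displayed line (which carries the prefactor $\Re(\gamma_0)/\Re(\gamma_\omega/\alpha)$ and produces $C$ rather than $C'$) --- but since you compute that intermediate quantity explicitly, nothing is actually missing.
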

\begin{proof}
Let
\[
\gamma_0=\left\{ \begin{array}{l l}
\gamma_0^{(\Omega)}=\sigma_\Omega & \mbox{ in } \Omega\setminus D,\\
\gamma_0^{(D)}=\sigma_D & \mbox{ in } D, \end{array}\right.
\quad
\gamma_\omega=\left\{ \begin{array}{l l}
\gamma_\omega^{(\Omega)}=\sigma_\Omega+\im \omega \epsilon_\Omega & \mbox{ in } \Omega\setminus D,\\
\gamma_\omega^{(D)}=\sigma_D+\im \omega \epsilon_D & \mbox{ in } D, \end{array}\right.
\]
with real-valued constants $\sigma_\Omega,\sigma_D,\epsilon_\Omega,\epsilon_D>0$,
and let $\alpha:=\gamma_\omega^{(\Omega)} / \gamma_0^{(\Omega)} = 1+\im \omega \frac{\epsilon_\Omega}{\sigma_\Omega}\in \C$.

Then, by definition of $\alpha$,
\[
\gamma_\omega/\alpha-\gamma_0 = 0 \quad \mbox{ in } \Omega\setminus D,\quad
\mbox{ and } \quad
\Im(\gamma_\omega/\alpha)=0 \quad \mbox{ in } \Omega\setminus D,
\]
so that
\begin{eqnarray*}
\frac{\Re(\gamma_0)}{\Re(\gamma_\omega/\alpha)} \Re(\gamma_\omega/\alpha-\gamma_0)
= 0\quad \mbox{ in } \Omega\setminus D,\\
\Re(\gamma_\omega/\alpha-\gamma_0) + \frac{\Im(\gamma_\omega/\alpha)^2}{\Re(\gamma_\omega/\alpha)}=0
 \quad \mbox{ in } \Omega\setminus D,\\
\Re(\gamma_\omega/\alpha-(1+\tilde\beta\chi_B)\gamma_0)=-\tilde\beta\sigma_\Omega\chi_B
 \quad \mbox{ in } \Omega\setminus D,\\
 \Re(\gamma_\omega/\alpha-(1+\tilde\beta\chi_B)\gamma_0) + \frac{\Im(\gamma_\omega/\alpha)^2}{\Re(\gamma_\omega/\alpha)}=-\tilde\beta\sigma_\Omega\chi_B
 \quad \mbox{ in } \Omega\setminus D.
\end{eqnarray*}

In $D$, we have that
\begin{eqnarray*}
\Re(\gamma_0) = \sigma_D,\\
\Re(\gamma_\omega / \alpha)=\Re \left(\gamma^{(D)}_\omega \frac{\gamma_0^{(\Omega)}}{\gamma_\omega^{(\Omega)}} \right)
= \sigma_\Omega \Re\left( \frac{\sigma_D+\im \omega \epsilon_D}{\sigma_\Omega+\im \omega \epsilon_\Omega}\right)
= \sigma_\Omega  \frac{\sigma_D \sigma_\Omega + \omega^2 \epsilon_D \epsilon_\Omega}{\sigma_\Omega^2+\omega^2 \epsilon_\Omega^2},\\
\Im(\gamma_\omega / \alpha)=\Im \left(\gamma^{(D)}_\omega \frac{\gamma_0^{(\Omega)}}{\gamma_\omega^{(\Omega)}} \right)
=\sigma_\Omega \Im\left( \frac{\sigma_D+\im \omega \epsilon_D}{\sigma_\Omega+\im \omega \epsilon_\Omega}\right)
= \omega \sigma_\Omega  \frac{\epsilon_D \sigma_\Omega -  \epsilon_\Omega \sigma_D}{\sigma_\Omega^2+\omega^2 \epsilon_\Omega^2}.\\
\end{eqnarray*}
Hence, in $D$,
\[
\fl \Re(\gamma_\omega/\alpha-\gamma_0)
 = \sigma_\Omega  \frac{\sigma_D \sigma_\Omega + \omega^2 \epsilon_D \epsilon_\Omega}{\sigma_\Omega^2+\omega^2 \epsilon_\Omega^2} - \sigma_D
 = \omega^2 \epsilon_\Omega \frac{ \epsilon_D \sigma_\Omega - \sigma_D \epsilon_\Omega}{\sigma_\Omega^2+\omega^2 \epsilon_\Omega^2}=\frac{\epsilon_\Omega \sigma_D}{\sigma_\Omega}C',
\]
which shows that
\[
\frac{\Re(\gamma_0)}{\Re(\gamma_\omega/\alpha)}\Re(\gamma_\omega/\alpha-\gamma_0)
= \omega^2 \sigma_D \epsilon_\Omega \frac{\epsilon_D \sigma_\Omega  - \epsilon_\Omega \sigma_D }{\sigma_\Omega(\sigma_D \sigma_\Omega + \omega^2\epsilon_D\epsilon_\Omega)}
= \frac{\epsilon_\Omega \sigma_D}{\sigma_\Omega} C,
\]
and
\[
\Re(\gamma_\omega/\alpha-(1+\tilde\beta\chi_B)\gamma_0)
= \sigma_D\left(\frac{\epsilon_\Omega}{\sigma_\Omega}C'-\tilde\beta\chi_B\right).
\]

The remaining two equalities follow from
\begin{eqnarray*}
\Re(\gamma_\omega/\alpha-\gamma_0) + \frac{\Im(\gamma_\omega/\alpha)^2}{\Re(\gamma_\omega/\alpha)}\\
= \omega^2 \epsilon_\Omega \frac{\epsilon_D \sigma_\Omega   - \epsilon_\Omega\sigma_D }{\sigma_\Omega^2+\omega^2 \epsilon_\Omega^2}
+ \omega^2 \sigma_\Omega  \frac{(\epsilon_D \sigma_\Omega -  \epsilon_\Omega \sigma_D)^2}{(\sigma_\Omega^2+\omega^2 \epsilon_\Omega^2)(\sigma_D \sigma_\Omega + \omega^2 \epsilon_D \epsilon_\Omega)}\\
= \omega^2 \frac{\epsilon_D \sigma_\Omega   - \epsilon_\Omega\sigma_D }{\sigma_\Omega^2+\omega^2 \epsilon_\Omega^2}
\left( \epsilon_\Omega +\sigma_\Omega \frac{\epsilon_D \sigma_\Omega   - \epsilon_\Omega \sigma_D }{\sigma_D \sigma_\Omega + \omega^2 \epsilon_D \epsilon_\Omega}\right)\\
= \omega^2 \frac{\epsilon_D \sigma_\Omega   - \epsilon_\Omega\sigma_D }{\sigma_\Omega^2+\omega^2 \epsilon_\Omega^2}\,
\frac{\omega^2 \epsilon_D\epsilon_\Omega^2 +\epsilon_D \sigma_\Omega^2}
{\sigma_D \sigma_\Omega + \omega^2 \epsilon_D \epsilon_\Omega}
= \omega^2 \epsilon_D \frac{\epsilon_D \sigma_\Omega   - \epsilon_\Omega\sigma_D }
{\sigma_D \sigma_\Omega + \omega^2 \epsilon_D \epsilon_\Omega}
=\epsilon_D C,
\end{eqnarray*}
which also yields
\[
 \Re(\gamma_\omega/\alpha-(1+\tilde\beta\chi_B)\gamma_0) + \frac{\Im(\gamma_\omega/\alpha)^2}{\Re(\gamma_\omega/\alpha)}
= \epsilon_D C - \tilde\beta\sigma_D\chi_B.
\]
\end{proof}
Now we are ready to prove lemma~\ref{lemma:monotonicity} and theorem~\ref{thm:main_cont}.

\paragraph{Proof of lemma \ref{lemma:monotonicity}.}
For all $g\in \Ld^2(\partial \Omega)$ we have that
\begin{eqnarray*}
\fl
\int_{\partial \Omega} \overline{g} \Lambda(\gamma_1) g \dx[s]&= \int_{\partial \Omega}  \overline{g} u_{\gamma_1}^{(g)}|_{\partial \Omega} \dx[s]= \int_\Omega \overline{\gamma_1} \left|\nabla u_{\gamma_1}^{(g)}\right|^2 \dx= \int_\Omega \overline{\gamma_2 \nabla u_{\gamma_2}^{(g)}} \cdot \nabla u_{\gamma_1}^{(g)} \dx,\\
\fl
\int_{\partial \Omega} \overline{g} \Lambda(\gamma_2) g \dx[s]&= \int_{\partial \Omega}  \overline{g} u_{\gamma_2}^{(g)}|_{\partial \Omega} \dx[s]= \int_\Omega \overline{\gamma_2} \left|\nabla u_{\gamma_2}^{(g)}\right|^2 \dx= \int_\Omega \overline{\gamma_1 \nabla u_{\gamma_1}^{(g)}} \cdot \nabla u_{\gamma_2}^{(g)} \dx,
\end{eqnarray*}
so that the assertion of lemma \ref{lemma:monotonicity} immediately follows from lemma \ref{lemma:monotonicity_aux}.  \hfill $\Box$

\paragraph{Proof of theorem~\ref{thm:main_cont}.}
\begin{enumerate}[(a)]
\item 
\begin{enumerate}[(i)]
\item Let $c:=\epsilon_D \sigma_\Omega -\epsilon_\Omega\sigma_D>0$, and $B\subseteq D$.

We use the first inequality in lemma \ref{lemma:monotonicity} with $\gamma_2:=(1+\beta\chi_B)\gamma_0$, and $\gamma_1:= \gamma_\omega/\alpha$ 
together with the third equality in lemma~\ref{lemma:tedious_computation} with $\tilde \beta:=\beta$
to obtain that, for all $g\in \Ld^2(\partial \Omega)$,
\begin{eqnarray*}
\int_{\partial \Omega} \overline{g} \left[ \Lambda((1+\beta\chi_B)\gamma_0) - \Re\left( \Lambda(\gamma_\omega/\alpha)\right) \right] g \dx[s]\\
\geq \int_{\Omega}  \frac{\Re((1+\beta\chi_B)\gamma_0)}{\Re(\gamma_\omega/\alpha)} \Re(\gamma_\omega/\alpha-(1+\beta\chi_B)\gamma_0)  \left|\nabla u_{\gamma_2}^{(g)}\right|^2 \dx\\
= \int_{D} \frac{(1+\beta\chi_B)\gamma_0}{\Re(\gamma_\omega/\alpha)}\sigma_D\left(\frac{\epsilon_\Omega}{\sigma_\Omega}C'-\beta\chi_B\right)\left|\nabla u_{\gamma_2}^{(g)}\right|^2 \dx,
\end{eqnarray*}
where $C'$ is defined by \req{Def_C} in lemma~\ref{lemma:tedious_computation}. The right hand side is non-negative if 
\[
\beta\leq \frac{\epsilon_\Omega}{\sigma_\Omega}C' = \omega^2 |c| \frac{\epsilon_\Omega}{\sigma_D(\sigma_\Omega^2 + \omega^2 \epsilon_\Omega^2)},
\]
so that, for sufficiently small $\beta>0$,
\begin{equation*}
 B\subseteq D \quad \mbox{ implies } \quad
\Re\left(\alpha \Lambda( \gamma_\omega) \right)\leq \Lambda((1+\beta\chi_B)\gamma_0).
\end{equation*}
\item Now let $c:=\epsilon_D \sigma_\Omega -\epsilon_\Omega\sigma_D>0$, and $B\not\subseteq D$.

We use the second inequality in lemma \ref{lemma:monotonicity} with $\gamma_2:=\gamma_0$, and $\gamma_1:= \gamma_\omega/\alpha$ together with the second equality in lemma~\ref{lemma:tedious_computation} to obtain that, for all $g\in \Ld^2(\partial \Omega)$,
\begin{eqnarray*}
\nonumber \int_{\partial \Omega} \overline{g} \left[ \Lambda(\gamma_0) - \Re\left( \Lambda(\gamma_\omega/\alpha)\right) \right] g \dx[s]\\
\labeq{main_cont_hilf1}
\leq \int_{\Omega} \left( \Re(\gamma_\omega/\alpha-\gamma_0) + \frac{\Im(\gamma_\omega/\alpha)^2}{\Re(\gamma_\omega/\alpha)}\right) \left|\nabla u_{\gamma_0}^{(g)}\right|^2 \dx\\
\nonumber = \epsilon_D C \int_{D} \left|\nabla u_{\gamma_0}^{(g)}\right|^2 \dx,
\end{eqnarray*}
where $C$ is defined by \req{Def_C} in lemma~\ref{lemma:tedious_computation}. The first inequality in lemma \ref{lemma:monotonicity} with $\gamma_2:=\gamma_0$ and $\gamma_1:=(1+\beta\chi_B)\gamma_0$ yields that, for all $g\in \Ld^2(\partial \Omega)$,
\begin{equation*}
\labeq{main_cont_hilf2}
\int_B  \frac{\beta}{1+\beta} \gamma_0 \left|\nabla u_{\gamma_0}^{(g)}\right|^2 \dx
\leq \int_{\partial \Omega} \overline{g} \left[ \Lambda(\gamma_0) - \Lambda((1+\beta\chi_B)\gamma_0)\right] g \dx[s].
\end{equation*}
Combining both inequalities, we obtain that, for all $g\in \Ld^2(\partial \Omega)$,
\begin{eqnarray*}
\int_{\partial \Omega} \overline{g} \left[ \Lambda((1+\beta\chi_B)\gamma_0) - \Re\left( \alpha\Lambda(\gamma_\omega)\right)\right] g \dx[s]\\
\leq \epsilon_D C \int_{D} \left|\nabla u_{\gamma_0}^{(g)}\right|^2 \dx -\int_B  \frac{\beta}{1+\beta} \gamma_0 \left|\nabla u_{\gamma_0}^{(g)}\right|^2 \dx.
\end{eqnarray*}

Now we apply the technique of localized potentials \cite{gebauer2008localized,harrach2013monotonicity} to show that the right hand side of this inequality attains negative values. Since $B\not\subseteq D$ we can choose a smaller open subset $B'\subseteq B$ with $\overline {B'}\cap D=\emptyset$. Since $D\subset \Omega$ and $\Omega\setminus D$ is connected,
we obtain from \cite[Thm.~3.6]{harrach2013monotonicity} a sequence of currents $(g_k)_{k\in \N}\subset \Ld^2(\partial \Omega)$, so that the solutions $(u^{(g_k)})_{k\in \N}\subset \Hd^1(\Omega)$
of
\[
\Delta u^{(g_k)}=0, \quad \partial_\nu u^{(g_k)}|_{\partial \Omega}=g_k
\]
fulfill
\[
\lim_{k\to \infty} \int_{B'} |\nabla u^{(g_k)}|^2\dx= \infty \quad \mbox{ and } \quad \lim_{k\to \infty} \int_{D} |\nabla u^{(g_k)}|^2\dx=0.
\]
Since $\gamma_0$ is constant on $\Omega\setminus D$, \cite[Lemma~3.7]{harrach2013monotonicity} yields that also the corresponding solutions
$(u_{\gamma_0}^{(g_k)})_{k\in \N}\subset \Hd^1(\Omega)$ of \req{EIT}
fulfill
\[
\lim_{k\to \infty} \int_{B'} |\nabla u_{\gamma_0}^{(g_k)}|^2\dx= \infty \quad \mbox{ and } \quad \lim_{k\to \infty} \int_{D} |\nabla u_{\gamma_0}^{(g_k)}|^2\dx=0.
\]
Hence, with this sequence of currents,
\[
\int_{\partial \Omega} \overline{g_k} \left[ \Lambda((1+\beta\chi_B)\gamma_0) - \Re\left( \alpha\Lambda(\gamma_\omega)\right)\right] g_k \dx[s]\to -\infty,
\]
which shows that, for all $\beta>0$,
\begin{equation*}
 B\not\subseteq D \quad \mbox{ implies } \quad
\Re\left(\alpha \Lambda( \gamma_\omega) \right)\not\leq \Lambda((1+\beta\chi_B)\gamma_0).
\end{equation*}
\end{enumerate}
\item 
\begin{enumerate}[(i)]
\item Let $c:=\epsilon_D \sigma_\Omega -\epsilon_\Omega\sigma_D<0$, and $B\subseteq D$.

We use the second inequality in lemma \ref{lemma:monotonicity} with $\gamma_2:=(1-\beta\chi_B)\gamma_0$, and $\gamma_1:= \gamma_\omega/\alpha$ 
together with the fourth equality in lemma~\ref{lemma:tedious_computation} with $\tilde \beta:=-\beta$
to obtain that, for all $g\in \Ld^2(\partial \Omega)$,
\begin{eqnarray*}
\int_{\partial \Omega} \overline{g} \left[ \Lambda((1- \beta\chi_B)\gamma_0) - \Re\left( \alpha\Lambda(\gamma_\omega)\right)\right] g \dx[s]\\
\leq \int_{\Omega} \left( \Re(\gamma_\omega/\alpha-(1-\beta\chi_B)\gamma_0) + \frac{\Im(\gamma_\omega/\alpha)^2}{\Re(\gamma_\omega/\alpha)}\right) \left|\nabla u_{\gamma_2}^{(g)}\right|^2 \dx.\\
=\int_{D} (\epsilon_D C +\beta\sigma_D\chi_B)\left|\nabla u_{\gamma_2}^{(g)}\right|^2 \dx.
\end{eqnarray*}
The right hand side is non-positive if 
\[
\beta\leq -\frac{\epsilon_D}{\sigma_D}C = \omega^2 |c|   \frac{\epsilon_D}{\sigma_D (\sigma_D \sigma_\Omega + \omega^2 \epsilon_D \epsilon_\Omega)},
\]
so that, for sufficiently small $\beta>0$,
\begin{equation*}
 B\subseteq D \quad \mbox{ implies } \quad
\Re\left(\alpha \Lambda( \gamma_\omega) \right)\geq \Lambda((1-\beta\chi_B)\gamma_0).
\end{equation*}
\item Now let $c:=\epsilon_D \sigma_\Omega -\epsilon_\Omega\sigma_D<0$, and $B\not\subseteq D$.

We use the first inequality in lemma \ref{lemma:monotonicity} with $\gamma_2:=\gamma_0$, and $\gamma_1:= \gamma_\omega/\alpha$ together with the first equality in lemma~\ref{lemma:tedious_computation} to obtain that, for all $g\in \Ld^2(\partial \Omega)$,
\begin{eqnarray*}
\nonumber \int_{\partial \Omega} \overline{g} \left[ \Lambda(\gamma_0) - \Re\left( \Lambda(\gamma_\omega/\alpha)\right) \right] g \dx[s]\\
\geq \int_{\Omega}  \frac{\Re(\gamma_0)}{\Re(\gamma_\omega/\alpha)} \Re(\gamma_\omega/\alpha-\gamma_0)  \left|\nabla u_{\gamma_0}^{(g)}\right|^2 \dx\\
= \frac{\epsilon_\Omega\sigma_D}{\sigma_\Omega} C \int_{D} \left|\nabla u_{\gamma_0}^{(g)}\right|^2 \dx.
\end{eqnarray*}
The second inequality in lemma \ref{lemma:monotonicity} with $\gamma_2:=\gamma_0$ and $\gamma_1:=(1-\beta\chi_B)\gamma_0$ yields that, for all $g\in \Ld^2(\partial \Omega)$,
\begin{equation*}
\int_{\partial \Omega} \overline{g} \left[ \Lambda(\gamma_0) - \Lambda((1-\beta\chi_B)\gamma_0)\right] g \dx[s]\\
\leq - \int_B  \beta\gamma_0 \left|\nabla u_{\gamma_0}^{(g)}\right|^2 \dx.
\end{equation*}
Combining both inequalities, we obtain that, for all $g\in \Ld^2(\partial \Omega)$,
\begin{eqnarray*}
\int_{\partial \Omega} \overline{g} \left[ \Lambda((1-\beta\chi_B)\gamma_0) - \Re\left( \alpha\Lambda(\gamma_\omega)\right)\right] g \dx[s]\\
\geq \frac{\epsilon_\Omega\sigma_D}{\sigma_\Omega} C \int_{D} \left|\nabla u_{\gamma_0}^{(g)}\right|^2 \dx +
\int_B  \beta\gamma_0 \left|\nabla u_{\gamma_0}^{(g)}\right|^2 \dx.
\end{eqnarray*}
The same localized potentials argument as in part (a)(ii) shows that there exists a sequence of currents such that 
\[
\int_{\partial \Omega} \overline{g} \left[ \Lambda((1-\beta\chi_B)\gamma_0) - \Re\left( \alpha\Lambda(\gamma_\omega)\right)\right] g \dx[s]\to \infty.
\]
Hence, for all $\beta>0$,
\begin{equation*}
 B\not\subseteq D \quad \mbox{ implies } \quad
\Re\left(\alpha \Lambda( \gamma_\omega) \right)\not\geq \Lambda((1-\beta\chi_B)\gamma_0).
\end{equation*}
\end{enumerate}
\end{enumerate}
\hfill $\Box$

\kommentar{
\begin{remark}\label{Rem:more_general_complex_conductivities}
The theory of theorem 2.2 can be transfered for more general complex admittivities. As before, let the imaging domain consists of a homogeneous background medium with one or several conductivity anomalies
(inclusions) $D$. Furthermore, let $\omega>0$ be a fixed frequency, $\sigma_\Omega$ and $\epsilon_\Omega=\epsilon_\Omega(\omega)$ be real constants.

Then, we consider the complex admittivities (for $\omega=0$ and $\omega>0$ fixed)
\begin{eqnarray*}
\labeq{gamma_02} \gamma_0(x)=\left\{ \begin{array}{l l} \gamma_0^{(\Omega)}(x)\equiv\sigma_\Omega & \mbox{ for } x\in \Omega\setminus D,\\
\gamma_0^{(D)}(x)=\hat\sigma_D(x) & \mbox{ for } x\in D, \end{array}\right.\\
\labeq{gamma_omega2} \gamma_\omega(x)=\left\{ \begin{array}{l l} \gamma_\omega^{(\Omega)}(x)\equiv
\sigma_\Omega+\im \omega \epsilon_\Omega & \mbox{ for } x\in \Omega\setminus D,\\
\gamma_\omega^{(D)}(x)=
\hat\sigma_D(x)+\im \omega \hat\epsilon_D(x) & \mbox{ for } x\in D, \end{array}\right.
\end{eqnarray*}
When either
\begin{equation}\label{eq:remark_about_more_general_conductivities_case_1}
 \frac{\epsilon_D}{\sigma_D}>\frac{\epsilon_\Omega}{\sigma_\Omega}\quad\mbox{with}\quad \sigma_D:=\mathrm{ess\,sup}(\hat\sigma_D),\ \epsilon_D:=\mathrm{ess\,inf}(\hat\epsilon_D)
\end{equation}
or
\begin{equation}\label{eq:remark_about_more_general_conductivities_case_2}
 \frac{\epsilon_D}{\sigma_D}<\frac{\epsilon_\Omega}{\sigma_\Omega}\quad\mbox{with}\quad \sigma_D:=\mathrm{ess\,inf}(\hat\sigma_D),\ \epsilon_D:=\mathrm{ess\,sup}(\hat\epsilon_D).
\end{equation}
is fulfilled, theorem \ref{thm:main_cont} still holds. It can be easily checked that the inequalities in (\ref{eq:inequality_for_main_3}) and (\ref{eq:inequality_for_main_4})
still hold with (\ref{eq:remark_about_more_general_conductivities_case_1}) and (\ref{eq:remark_about_more_general_conductivities_case_2}), respectively.

Even for complex admittivities with frequency depend $\Re(\gamma_\omega(x))=\hat\sigma^\omega_D(x)$ as in \cite{harrach2009detecting}, the theory of theorem \ref{thm:main_cont} can be transfered to this case.

\end{remark}
}

\section{Electrode measurements}
\label{Sec:electrodes}

\subsection{The setting}
\label{Subsec:setting_electrodes}

In a realistic setting, the currents will be applied using a finite number of electrodes $\mathcal E_l\subset \partial \Omega$, $l=1,\ldots,m$,
that are attached to the imaging domain's surface. We assume that the electrodes are perfectly conducting and that contact impedances are negligible (the so-called \emph{shunt model}, cf., e.g., \cite{Che99}). Driving a current $I_l\in \C$ through the $l$-th electrode, with $\sum_{l=1}^m I_l=0$, the electric potential is given by the solution
$u_{\gamma_\omega}\in H^1_{\mathcal E}(\Omega)$ of
\begin{eqnarray}
\labeq{shunt1} \nabla \cdot (\gamma_\omega \nabla u_{\gamma_\omega}) = 0\quad  \mbox{ in } \Omega,\\
\labeq{shunt2} \int_{\mathcal E_l} \gamma_\omega \partial_\nu u_{\gamma_\omega} \dx[s] = I_l \quad \mbox{ for } l=1,\ldots,m,\\
\labeq{shunt3} \gamma_\omega \partial_\nu u_{\gamma_\omega} =0 \quad  \mbox{ on } \partial \Omega\setminus \bigcup_{l=1}^m \mathcal E_l,\\
\labeq{shunt4} u_{\gamma_\omega}|_{\mathcal E_l}=\mathrm{const.} \quad \forall j=1,\ldots,m,
\end{eqnarray}
where $H^1_{\mathcal E}(\Omega)$ is the subspace of $H^1$-functions that are locally constant on each $\mathcal E_l$, $l=1,\ldots,m$ and these constants sum up to zero.

We assume that the voltage-current-measurements are carried out in the following complete dipole-dipole configuration.
Let $(j_r,k_r)$, $r=1,\ldots,N$ be a set of electrode pairs with $j_r\neq k_r$. For each of these pairs, $r=1,\ldots,N$, a current of $I_{j_r}=1$ and $I_{k_r}=-1$ is driven through the $j_r$-th and the $k_r$-th electrode, respectively. The other electrodes are kept insulated. The resulting electric potential inside the imaging domain is given by the solution $u^{\langle r\rangle}_{\gamma_\omega}\in H^1_{\mathcal E}(\Omega)$ of \req{shunt1}--\req{shunt4} with $I_l=\delta_{l,j_r}-\delta_{l,k_r}$, $l=1,\ldots,N$.

While the current is driven through the $r$-th pair of electrodes, we measure the required voltage difference
on all pairs of electrodes, i.e., between the $j_s$ and the $k_s$ electrode for all $s=1,\ldots,N$. We collect these measurements in the matrix
\[
R(\gamma_\omega) = \left( u^{\langle r\rangle}_{\gamma_\omega}|_{\mathcal E_{j_s}}- u^{\langle r\rangle}_{\gamma_\omega}|_{\mathcal E_{k_s}} \right)_{r,s=1,\ldots,N}\in \C^{N\times N}.
\]

Let us comment on our use of the shunt electrode model. It seems to be widely accepted that the most
accurate electrode model in EIT is the \emph{complete electrode model}, cf., e.g., \cite{Che99}, where not only the shunting effects but also contact impedances between the electrodes and the imaging domain are taken into account. The effect of contact impedances is often neglected in the case that voltages are not measured on current driven electrodes, but our method requires such measurements, see below.
Contact impedances can also be neglected in the case of DC difference measurements on point electrodes,
see \cite{hanke2011justification}. Since both, the effect of an ultrasound modulation and the effect of a (weighted) frequency change on the measurements are widely analogous to using DC difference measurements, we believe that our use of the shunt model is justified for sufficiently small electrodes, though this has yet to be justified rigorously.

We also stress that our method relies on the matrix structure of the measurements $R$, which means that the same electrode pairs have to be used for measuring voltages and applying currents. In particular, we require voltage measurements on current driven electrodes (for the three main diagonals in $R$). The simultaneous measurement of voltage and current is usually considered problematic and these measurements are avoided in traditional EIT approaches. Nevertheless, successful reconstructions have already been obtained in practical phantom experiments with methods requiring the full matrix such as the factorization method and monotonicity-based methods, cf. \cite{harrach2010factorization,zhou_preprint}. Also, the recent preprint \cite{harrachinterpolation} studies the possibility of interpolating the voltages on current-driven electrodes from the measurements on current-free electrodes.

\subsection{Monotonicity results for the shunt model}\label{Subsec:Monotonicity_results_electrode}

As in the continuous case, we will compare measurements in the sense of matrix definiteness.
We define the self-adjoint part of a matrix $A\in \C^{N\times N}$ by setting
\[
\Re(A):=\frac{1}{2}(A+A^*)
\]
where $A^*\in \C^{N\times N}$ is the adjoint (conjugate transpose) of $A$, i.e.,
\[
g^* (A h)  = (Ag)^* h \quad \mbox{ for all } g,h\in \C^N, \quad \mbox{ and } \quad g^*=\overline{g}^T.
\]
Obviously, $\Re(A)$ is self-adjoint.

For two self-adjoint matrices $A,B\in \C^{N\times N}$, we write $A\leq B$ if
$B-A$ is positive semidefinite, i.e.
\[
g^* A g\leq g^* B g \quad \forall g\in \C^N.
\]
This is equivalent to the fact that all eigenvalues of $B-A$ are non-negative.

Note that the entries of the measurement matrix $R(\gamma_\omega)$ satisfy
\begin{eqnarray*}
 u^{\langle r\rangle}_{\gamma_\omega}|_{\mathcal E_{j_s}}- u^{\langle r\rangle}_{\gamma_\omega}|_{\mathcal E_{k_s}}
 = u^{\langle r\rangle}_{\gamma_\omega}|_{\mathcal E_{j_s}} \int_{\mathcal E_{j_s}} \gamma_\omega \partial_\nu u^{\langle s\rangle}_{\gamma_\omega} \dx[s]
 + u^{\langle r\rangle}_{\gamma_\omega}|_{\mathcal E_{k_s}} \int_{\mathcal E_{k_s}} \gamma_\omega \partial_\nu u^{\langle s\rangle}_{\gamma_\omega} \dx[s]\\
 = \int_{\partial \Omega}\gamma_\omega \partial_\nu u^{\langle s\rangle}_{\gamma_\omega}|_{\partial \Omega} \, u^{\langle r\rangle}_{\gamma_\omega}|_{\partial \Omega}
 \dx[s] = \int_\Omega \gamma_\omega \nabla u_\omega^{(r)}\cdot \nabla u^{\langle s\rangle}_{\gamma_\omega}
 = u^{\langle s\rangle}_{\gamma_\omega}|_{\mathcal E_{j_r}}- u^{\langle s\rangle}_{\gamma_\omega}|_{\mathcal E_{k_r}}.
\end{eqnarray*}
Hence $R(\gamma_\omega)$ is a symmetric, but generally (for complex $\gamma_\omega$) not self-adjoint matrix.
This also shows that the self-adjoint part of the measurement matrix $\Re(R(\gamma_\omega))$ is identical to the
matrix containing the real part of each voltage measurement
\[
\Re(R(\gamma_\omega)) = \left( \Re(u^{\langle r\rangle}_{\gamma_\omega})|_{\mathcal E_{j_s}}- \Re(u^{\langle r\rangle}_{\gamma_\omega})|_{\mathcal E_{k_s}} \right)_{r,s=1,\ldots,N}\in \R^{N\times N}.
\]

The montonicity estimate from the continuous case can be extended to the case of electrode measurements.
\begin{lemma}\label{lemma:monotonicity_shunt}
Let $\gamma_1,\gamma_2\in L_+^\infty(\Omega;\R)+\im L^\infty(\Omega;\R)$, $g=(g_r)_{r=1}^N\in \C^N$
and  $u^{[g]}_{\gamma_\tau}\in H^1_{\mathcal E}(\Omega)$ ($\tau=1,2$) denote the solution of
\begin{eqnarray*}
\nabla \cdot (\gamma_\tau \nabla u^{[g]}_{\gamma_\tau}) = 0\quad  \mbox{ in } \Omega,\\
\int_{\mathcal E_{l}} \gamma_\tau \partial_\nu u^{[g]}_{\gamma_\tau} \dx[s] = \sum_{r:\ j_r=l} g_r - \sum_{r:\ k_r=l} g_r
\quad \mbox{ for all } l=1,\ldots,m,\\
\gamma_\tau \partial_\nu u^{[g]}_{\gamma_\tau} =0 \quad  \mbox{ on } \partial \Omega\setminus \bigcup_{l=1}^m \mathcal E_l,\\
u^{[g]}_{\gamma_\tau}|_{\mathcal E_l}=\mathrm{const.} \quad \forall l=1,\ldots,m.
\end{eqnarray*}
Then
\begin{eqnarray*}
\int_{\Omega} \left( \frac{\Re(\gamma_2)}{\Re(\gamma_1)} \Re(\gamma_1-\gamma_2) - \frac{\Im(\gamma_2)^2}{\Re(\gamma_1)}\right) \left|\nabla u^{[g]}_{\gamma_2}\right|^2 \dx\\
\leq  g^* \,  \Re\left[ R(\gamma_2)-  R(\gamma_1)\right]  g
\leq \int_{\Omega} \left( \Re(\gamma_1-\gamma_2) + \frac{\Im(\gamma_1)^2}{\Re(\gamma_1)}\right) \left|\nabla u^{[g]}_{\gamma_2}\right|^2 \dx.
\end{eqnarray*}
\end{lemma}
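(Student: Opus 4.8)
The plan is to reduce Lemma~\ref{lemma:monotonicity_shunt} to the abstract estimate of Lemma~\ref{lemma:monotonicity_aux}, in exactly the same way that Lemma~\ref{lemma:monotonicity} was deduced from it in the continuous case — the only change being that the Neumann-to-Dirichlet operator $\Lambda(\gamma)$ and the $L^2(\partial\Omega)$ pairing are replaced by the measurement matrix $R(\gamma)$ and the Euclidean pairing on $\C^N$. Concretely, with $u_1:=u^{[g]}_{\gamma_1}$ and $u_2:=u^{[g]}_{\gamma_2}$ I would verify the two hypotheses of Lemma~\ref{lemma:monotonicity_aux} and show that the quantity $g^*\,\Re[R(\gamma_2)-R(\gamma_1)]\,g$ coincides with the middle term $\int_\Omega\Re(\gamma_2)|\nabla u_2|^2-\int_\Omega\Re(\gamma_1)|\nabla u_1|^2$ of that lemma; Lemma~\ref{lemma:monotonicity_aux} applied to $u_1,u_2$ then yields the two stated inequalities verbatim.

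The bridge is built from three elementary facts. Since the shunt problem \req{shunt1}--\req{shunt4} is linear in the applied currents, superposition gives $u^{[g]}_{\gamma}=\sum_{r=1}^N g_r\,u^{\langle r\rangle}_{\gamma}$, so $u^{[g]}_\gamma$ is precisely the solution with the boundary data of the lemma and lies in $H^1_{\mathcal E}(\Omega)\subset H^1(\Omega)$; moreover so does its conjugate $\overline{u^{[g]}_\gamma}$, which solves the same problem with conductivity $\overline\gamma$ and currents $\overline{I_l}$, where $I_l:=\sum_{r:\,j_r=l}g_r-\sum_{r:\,k_r=l}g_r$. Combining superposition with the entry formula $R(\gamma)_{r,s}=\int_\Omega\gamma\,\nabla u^{\langle r\rangle}_{\gamma}\cdot\nabla u^{\langle s\rangle}_{\gamma}$ established just above the lemma, bilinearity gives $g^* R(\gamma)\,g=\int_\Omega\gamma\,\nabla u^{[\overline g]}_\gamma\cdot\nabla u^{[g]}_\gamma$. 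The decisive point is to introduce the linear functional $\ell$ on $H^1_{\mathcal E}(\Omega)$ defined by $\ell(\phi):=\sum_{l=1}^m\overline{I_l}\,\phi|_{\mathcal E_l}$: by the (symmetric, non-sesquilinear) weak formulation of the shunt model applied to $\overline{u^{[g]}_{\gamma_\tau}}$ one has $\ell(\phi)=\int_\Omega\overline{\gamma_\tau\,\nabla u^{[g]}_{\gamma_\tau}}\cdot\nabla\phi$ for \emph{both} $\tau=1,2$, while inserting the test function $\phi=u^{[g]}_\gamma\in H^1_{\mathcal E}(\Omega)$ in the variational form of the equation for $u^{[\overline g]}_\gamma$ shows $g^* R(\gamma)\,g=\ell(u^{[g]}_\gamma)$.

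With this in hand everything is formal. Taking $\phi=u_\tau$ gives $g^*R(\gamma_\tau)\,g=\ell(u_\tau)=\int_\Omega\overline{\gamma_\tau}\,|\nabla u_\tau|^2$, so, using that $g^*\Re(A)g=\Re(g^*Ag)$ for every $A\in\C^{N\times N}$, taking real parts produces the identity $g^*\Re[R(\gamma_2)-R(\gamma_1)]\,g=\int_\Omega\Re(\gamma_2)|\nabla u_2|^2-\int_\Omega\Re(\gamma_1)|\nabla u_1|^2$. Evaluating instead the two representations $\ell(u_1)=\int_\Omega\overline{\gamma_1}|\nabla u_1|^2$ and $\ell(u_1)=\int_\Omega\overline{\gamma_2\nabla u_2}\cdot\nabla u_1$ at $\phi=u_1$ and conjugating yields $\int_\Omega\gamma_1|\nabla u_1|^2=\int_\Omega\gamma_2\nabla u_2\cdot\overline{\nabla u_1}$, and the symmetric choice $\phi=u_2$ gives $\int_\Omega\gamma_2|\nabla u_2|^2=\int_\Omega\gamma_1\nabla u_1\cdot\overline{\nabla u_2}$; these are precisely the hypotheses of Lemma~\ref{lemma:monotonicity_aux}, which completes the proof. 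I expect the only obstacle here to be not conceptual but the careful bookkeeping of complex conjugates in the discrete setting — in particular verifying that the single current functional $\ell$ is represented, for both conductivities simultaneously, as $\phi\mapsto\int_\Omega\overline{\gamma_\tau\nabla u^{[g]}_{\gamma_\tau}}\cdot\nabla\phi$, which is exactly the discrete analogue of the mixed identity $\int_{\partial\Omega}\overline g\,\Lambda(\gamma_1)g=\int_\Omega\overline{\gamma_2\nabla u^{(g)}_{\gamma_2}}\cdot\nabla u^{(g)}_{\gamma_1}$ underlying the continuous proof.
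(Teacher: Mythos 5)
Your proposal is correct and follows essentially the same route as the paper: both reduce the claim to lemma~\ref{lemma:monotonicity_aux} by showing, via the variational (Green's) formulation of the shunt model tested against $u^{[g]}_{\gamma_1}$ and $u^{[g]}_{\gamma_2}$, that $g^*R(\gamma_\tau)g=\sum_l\overline{I_l}\,u^{[g]}_{\gamma_\tau}|_{\mathcal E_l}=\int_\Omega\overline{\gamma_\tau}\bigl|\nabla u^{[g]}_{\gamma_\tau}\bigr|^2\dx$ together with the two cross identities that form the hypotheses of that lemma. Your explicit introduction of the current functional $\ell$ and of $u^{[\overline g]}_\gamma$ is only a cosmetic repackaging of the paper's electrode-by-electrode summation.
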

The proof of lemma~\ref{lemma:monotonicity_shunt} is postponed to the end of this section.

\subsection{Detecting inclusions from electrode measurements}

We make the same assumptions as for the continuous case in subsection~\ref{subsect:detect_cont}.
The inclusion (or anomaly) $D\subset \Omega$ is assumed to be a closed set with connected complement.
$\gamma_0$ and $\gamma_\omega$ are assumed to be given by
\begin{eqnarray*}
\gamma_0(x)=\left\{ \begin{array}{l l} \gamma_0^{(\Omega)}=\sigma_\Omega & \mbox{ for } x\in \Omega\\
\gamma_0^{(D)}=\sigma_D & \mbox{ for } x\in D \end{array}\right.\\
\gamma_\omega(x)=\left\{ \begin{array}{l l} \gamma_\omega^{(\Omega)}=\sigma_\Omega+\im \omega \epsilon_\Omega & \mbox{ for } x\in \Omega\\
\gamma_\omega^{(D)}=\sigma_D+\im \omega \epsilon_D & \mbox{ for } x\in D \end{array}\right.
\end{eqnarray*}
with real-valued constants $\sigma_\Omega,\sigma_D,\epsilon_\Omega,\epsilon_D>0$. The anomaly is assumed to fulfill
the contrast condition \req{contrast}, i.e., $\epsilon_D \sigma_\Omega -   \epsilon_\Omega \sigma_D\neq 0$, and
\[
\alpha:=\frac{\gamma_\omega^{(\Omega)}}{\gamma_0^{(\Omega)}} =1+\im \omega \frac{\epsilon_\Omega}{\sigma_\Omega}.
\]
denotes the ratio of the background conductivities. Obviously, $\alpha R(\gamma_\omega)=R(\gamma_\omega / \alpha)$.

As in section \ref{Sec:continuous}, the results in this section can easily be extended to inclusions of spatially varying and frequency-dependent admittivities as long as the background conductivities are constant.

Our results for continuous boundary data suggest to compare, for sufficiently small modulation strengths $\beta>0$, the matrix of ultrasound-modulated DC measurements $R((1+\beta\chi_B)\gamma_0)$ with the
(self-adjoint part of the ratio-weighted) matrix of measurements taken at a non-zero frequency $R( \gamma_\omega)$.
This comparison (in the sense of matrix definiteness) should yield information about whether the
focusing region $B$ lies inside the unknown inclusion $D$. Indeed, we can prove the following theorem.

\begin{thm}\label{thm:electrode}
Let $c:=\epsilon_D \sigma_\Omega -\epsilon_\Omega\sigma_D\neq 0$. 
\begin{enumerate}[(a)]
\item If $c>0$, then for sufficiently small $\beta>0$ and every open set $B\subseteq \Omega$, 
\begin{equation}\label{eq:main_impl_elect_cpos}
 B\subseteq D \quad \mbox{ implies that } \quad
\Re\left(\alpha  R( \gamma_\omega) \right) \leq R((1+\beta\chi_B)\gamma_0).
\end{equation}
\item If $c<0$, then for sufficiently small $\beta>0$ and every open set $B\subseteq \Omega$, 
\begin{equation}\label{eq:main_impl_elect_cneg}
 B\subseteq D \quad \mbox{ implies that } \quad
\Re\left(\alpha  R( \gamma_\omega) \right) \geq R((1-\beta\chi_B)\gamma_0).
\end{equation}
\end{enumerate}
The modulation strength $\beta>0$ is sufficiently small if
\[
\beta \leq \left\{ \begin{array}{l l} 
\omega^2 |c| \frac{\epsilon_\Omega}{\sigma_D(\sigma_\Omega^2 + \omega^2 \epsilon_\Omega^2)} & \mbox{ in case (a)},\\
\omega^2 |c|   \frac{\epsilon_D}{\sigma_D (\sigma_D \sigma_\Omega + \omega^2 \epsilon_D \epsilon_\Omega)}
& \mbox{ in case (b)}.
\end{array}\right.
\]
\end{thm}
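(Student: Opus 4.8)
The plan is to run, \emph{mutatis mutandis}, the ``only if'' directions of the proof of Theorem~\ref{thm:main_cont}, replacing the Neumann-to-Dirichlet operators $\Lambda(\cdot)$ by the measurement matrices $R(\cdot)$ and the continuous monotonicity estimate of Lemma~\ref{lemma:monotonicity} by its electrode counterpart, Lemma~\ref{lemma:monotonicity_shunt}. The point is that the two implications ``$B\subseteq D\Rightarrow$ definiteness'' proved in parts (a)(i) and (b)(i) of that proof use \emph{only} the relevant monotonicity inequality together with the purely algebraic identities of Lemma~\ref{lemma:tedious_computation}; they never invoke localized potentials. Since Lemma~\ref{lemma:tedious_computation} concerns only the coefficients $\gamma_0,\gamma_\omega,\alpha$ and is therefore unchanged, and since $\alpha R(\gamma_\omega)=R(\gamma_\omega/\alpha)$ with $R$ of a real-valued conductivity being real symmetric (hence self-adjoint), the argument transfers essentially line by line.

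Concretely, in case (a) I would fix $g\in\C^N$, assume $c>0$ and $B\subseteq D$, and apply the lower bound of Lemma~\ref{lemma:monotonicity_shunt} with $\gamma_2:=(1+\beta\chi_B)\gamma_0$ and $\gamma_1:=\gamma_\omega/\alpha$. Since $\gamma_2$ is real the $\Im(\gamma_2)^2$-term drops out, and the third identity of Lemma~\ref{lemma:tedious_computation} (with $\tilde\beta:=\beta$) shows that the integrand vanishes on $\Omega\setminus D$ because $\chi_B$ does there, leaving
\[
g^*\Re\!\left[\,R((1+\beta\chi_B)\gamma_0)-\alpha R(\gamma_\omega)\,\right]g\ \ge\ \int_{D}\frac{(1+\beta\chi_B)\sigma_D}{\Re(\gamma_\omega/\alpha)}\,\sigma_D\Bigl(\frac{\epsilon_\Omega}{\sigma_\Omega}C'-\beta\chi_B\Bigr)\left|\nabla u^{[g]}_{\gamma_2}\right|^2\dx .
\]
All prefactors in the $D$-integrand are positive, so the right-hand side is $\ge 0$ as soon as $\beta\le\frac{\epsilon_\Omega}{\sigma_\Omega}C'=\omega^2|c|\,\frac{\epsilon_\Omega}{\sigma_D(\sigma_\Omega^2+\omega^2\epsilon_\Omega^2)}$, which is exactly the claimed threshold; since $g$ was arbitrary this gives $\Re(\alpha R(\gamma_\omega))\le R((1+\beta\chi_B)\gamma_0)$. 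In case (b) I would instead take $c<0$, $B\subseteq D$, and use the upper bound of Lemma~\ref{lemma:monotonicity_shunt} with $\gamma_2:=(1-\beta\chi_B)\gamma_0$ and $\gamma_1:=\gamma_\omega/\alpha$, combined with the fourth identity of Lemma~\ref{lemma:tedious_computation} (with $\tilde\beta:=-\beta$). Again the integrand vanishes on $\Omega\setminus D$ and equals $\epsilon_D C+\beta\sigma_D\chi_B$ on $D$; as $c<0$ forces $C<0$, the resulting quantity $\int_D(\epsilon_D C+\beta\sigma_D\chi_B)\left|\nabla u^{[g]}_{\gamma_2}\right|^2\dx$ is $\le 0$ once $\beta\le-\frac{\epsilon_D}{\sigma_D}C=\omega^2|c|\,\frac{\epsilon_D}{\sigma_D(\sigma_D\sigma_\Omega+\omega^2\epsilon_D\epsilon_\Omega)}$, which yields $\Re(\alpha R(\gamma_\omega))\ge R((1-\beta\chi_B)\gamma_0)$.

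There is no genuine obstacle for the stated one-sided claims; the real content is in what is \emph{not} asserted. The converse implications ``$B\not\subseteq D\Rightarrow$ indefiniteness'', which in Theorem~\ref{thm:main_cont} came from the localized-potentials construction of \cite{gebauer2008localized,harrach2013monotonicity} (producing boundary currents whose energy concentrates in $B\setminus\overline{D}$), cannot hold verbatim for a fixed finite electrode array, since the attainable electric fields span only a finite-dimensional space. This is precisely the gap discussed in remark~\ref{remark:electrode}, where it is argued, in the spirit of \cite{harrach2010factorization}, that regions outside $D$ are nonetheless correctly excluded once sufficiently many electrodes are used, because the electrode measurements then approximate the continuum operator well enough for the indefiniteness detected in the continuous setting to persist.
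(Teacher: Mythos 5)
Your proposal is correct and is exactly the paper's argument: the authors prove theorem~\ref{thm:electrode} by rerunning parts (a)(i) and (b)(i) of the proof of theorem~\ref{thm:main_cont} with lemma~\ref{lemma:monotonicity_shunt} in place of lemma~\ref{lemma:monotonicity}, using the unchanged identities of lemma~\ref{lemma:tedious_computation}, and your choices of $\gamma_1,\gamma_2$, the resulting integrands, and the thresholds for $\beta$ all match. Your closing observation about why the converse implications are withheld (the localized-potentials step does not survive the restriction to a fixed finite electrode array) is likewise precisely the content of remark~\ref{remark:electrode}.
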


The converses of the implications \req{main_impl_elect_cpos} and \req{main_impl_elect_cneg} will generally not be true in the case of measurements with a finite number of electrodes.
However, when we increase the number of electrodes used for the measurements, then we can expect that the measurement matrices $R( \gamma_\omega)$ and $R((1+\beta\chi_B)\gamma_0)$ more and more resemble their continuous counterparts,
the Neumann-to-Dirichlet operators, cf.\ the works of Hakula, Hyv{\"o}nen and Lechleiter \cite{hyvonen2004complete,lechleiter2008factorization,hyvonen2009approximating}.
In fact, we can give the following intuitive justification of the converses of the implications in theorem \ref{thm:electrode} for sufficiently many electrodes in the spirit of \cite{harrach2010factorization}.

\begin{remark}\label{remark:electrode}
Let $B\not\subseteq D$ and $\beta>0$.
If there exists a current pattern $g=(g_r)_{r=1}^N\in \C^N$ such that the resulting DC potential
\[
u^{[g]}_{\gamma_0}:=\sum_{r=1}^N g_r u_{\gamma_0}^{\langle r \rangle}
\]
possesses a very large energy in $B\setminus D$ and a very small energy in $D$, then
\[
\Re\left(\alpha R( \gamma_\omega) \right) \not\leq R((1+\beta\chi_B)\gamma_0)\quad \mbox{if}\quad c>0
\]
or
\[
\Re\left(\alpha R( \gamma_\omega) \right) \not\geq R((1-\beta\chi_B)\gamma_0)\quad \mbox{if}\quad c<0.
\]
\end{remark}

\subsection{Proof of lemma~\ref{lemma:monotonicity_shunt}, theorem \ref{thm:electrode} and justification of remark~\ref{remark:electrode}}

\paragraph{Proof of lemma~\ref{lemma:monotonicity_shunt}.}
Let $g=(g_r)_{r=1}^N\in \C^N$. First note that for $\tau=1,2$, by linearity,
\[
u^{[g]}_{\gamma_\tau}=\sum_{r=1}^N u^{\langle r \rangle}_{\gamma_\tau} g_r \quad \mbox{ and } \quad
\sum_{r=1}^N  g_r\left(u^{\langle r \rangle}_{\gamma_\tau}|_{\mathcal E_{j_s}} -  u^{\langle r \rangle}_{\gamma_\tau}|_{\mathcal E_{k_s}}\right)
= u^{[g]}_{\gamma_\tau}|_{\mathcal E_{j_s}} -  u^{[g]}_{\gamma_\tau}|_{\mathcal E_{k_s}}.
\]
We thus obtain
\begin{eqnarray*}
g^* R(\gamma_1) g = \sum_{s=1}^N \overline{g_s} \left( u^{[g]}_{\gamma_1}|_{\mathcal E_{j_s}} -  u^{[g]}_{\gamma_1}|_{\mathcal E_{k_s}} \right)
= \sum_{l=1}^m \left( \sum_{s:\ j_s=l} \overline{g_s}  -  \sum_{s:\ k_s=l} \overline{g_s}\right) u^{[g]}_{\gamma_1}|_{\mathcal E_l}\\
= \sum_{l=1}^m \int_{\mathcal E_{l}} \overline{\gamma_1 \partial_\nu u^{[g]}_{\gamma_1}}|_{\mathcal E_l} \, u^{[g]}_{\gamma_1}|_{\mathcal E_l} \dx[s]
= \int_{\partial \Omega} \overline{\gamma_1 \partial_\nu u^{[g]}_{\gamma_1}} \, u^{[g]}_{\gamma_1} \dx[s]= \int_\Omega \overline{\gamma_1} \left|\nabla u^{[g]}_{\gamma_1}\right|^2\dx\\[0.5em]
= \sum_{l=1}^m \int_{\mathcal E_{l}} \overline{\gamma_2 \partial_\nu u^{[g]}_{\gamma_2}}|_{\mathcal E_l} \, u^{[g]}_{\gamma_1}|_{\mathcal E_l} \dx[s]
= \int_{\partial \Omega} \overline{\gamma_2 \partial_\nu u^{[g]}_{\gamma_2}} \, u^{[g]}_{\gamma_1} \dx[s]= \int_\Omega \overline{\gamma_2 \nabla u^{[g]}_{\gamma_2}}\cdot \nabla u^{[g]}_{\gamma_1} \dx
\end{eqnarray*}
and likewise
\begin{eqnarray*}
g^* R(\gamma_2) g
= \int_\Omega \overline{\gamma_2} \left|\nabla u^{[g]}_{\gamma_2}\right|^2\dx = \int_\Omega \overline{\gamma_1 \nabla u^{[g]}_{\gamma_1}}\cdot \nabla u^{[g]}_{\gamma_2} \dx.
\end{eqnarray*}
Hence, the assertion follows from lemma \ref{lemma:monotonicity_aux}.\hfill $\Box$

\paragraph{Proof of theorem \ref{thm:electrode}.} The proof in identical to that of theorem \ref{thm:main_cont}(a)(i) and (b)(i) with lemma \ref{lemma:monotonicity_shunt} replacing lemma \ref{lemma:monotonicity}.\hfill $\Box$

\paragraph{Justification of remark~\ref{remark:electrode}.} As in theorem \ref{thm:main_cont}(a)(ii) and (b)(ii) (with lemma \ref{lemma:monotonicity_shunt} replacing lemma \ref{lemma:monotonicity}), we obtain that, for all $g\in \C^N$,
\begin{eqnarray*}
\fl g^* \left[ R((1+\beta\chi_B)\gamma_0) - \Re\left( \alpha R(\gamma_\omega)\right)\right] g 
\leq \epsilon_D C \int_{D} \left|\nabla u^{[g]}_{\gamma_0}\right|^2 \dx -\int_B  \frac{\beta}{1+\beta} \gamma_0 \left|\nabla u^{[g]}_{\gamma_0}\right|^2 \dx.
\end{eqnarray*}
and
\begin{eqnarray*}
\fl g^* \left[ R((1-\beta\chi_B)\gamma_0) - \Re\left( \alpha R(\gamma_\omega)\right)\right] g 
\geq \frac{\epsilon_\Omega\sigma_D}{\sigma_\Omega} C \int_{D} \left|\nabla u^{[g]}_{\gamma_0}\right|^2 \dx+\int_B  \beta\gamma_0 \left|\nabla u^{[g]}_{\gamma_0}\right|^2 \dx
\end{eqnarray*}
where $C$ is defined by \req{Def_C} in lemma~\ref{lemma:tedious_computation}.

Hence, if there exists a current pattern $g=(g_r)_{r=1}^N\in \C^N$ such that the resulting DC potential
$
u^{[g]}_{\gamma_0}
$
possesses a very large energy in $B\setminus D$ and a very small energy in $D$, then for this $g$, we can expect that
\[
g^* \left[ R((1+\beta\chi_B)\gamma_0) - \Re\left( \alpha R(\gamma_\omega)\right)\right] g <0,
\]
resp.,
\[
g^* \left[ R((1-\beta\chi_B)\gamma_0) - \Re\left( \alpha R(\gamma_\omega)\right)\right] g 
> 0,
\]
so that
\[
\Re\left(\alpha R( \gamma_\omega) \right) \not\leq R((1+\beta\chi_B)\gamma_0),
\quad
\mbox{resp.,}
\quad
\Re\left(\alpha R( \gamma_\omega) \right) \not\geq R((1-\beta\chi_B)\gamma_0).
\]
\hfill $\Box$

\kommentar{
\paragraph{Proof of theorem \ref{thm:electrode} and justification of remark~\ref{remark:electrode}.}
Let $\tilde\beta:=\beta$ if $c>0$ and $\tilde\beta:=-\beta$ if $c<0$. As in the proof of theorem \ref{thm:main_cont}, we obtain (from lemma \ref{lemma:tedious_computation} and \ref{lemma:monotonicity_shunt} instead of lemma \ref{lemma:tedious_computation} and \ref{lemma:monotonicity})
\begin{eqnarray}
\nonumber
\frac{\epsilon_\Omega\sigma_D}{\sigma_\Omega} C \int_{D} \left|\nabla u^{[g]}_{\gamma_0}\right|^2 \dx-\int_B  \tilde\beta\gamma_0 \left|\nabla u^{[g]}_{\gamma_0}\right|^2 \dx\\
\label{eq:inequality_for_elec_1}
\leq g^* \left[ R((1+\tilde\beta\chi_B)\gamma_0) - \Re\left( \alpha R(\gamma_\omega)\right)\right] g \\
\label{eq:inequality_for_elec_2}
\leq \epsilon_D C \int_{D} \left|\nabla u^{[g]}_{\gamma_0}\right|^2 \dx -\int_B  \frac{\tilde\beta}{1+\tilde\beta} \gamma_0 \left|\nabla u^{[g]}_{\gamma_0}\right|^2 \dx.
\end{eqnarray}

For the case $B\subseteq D$, as in the proof of theorem \ref{thm:main_cont}, with $\gamma_2:=(1+\tilde\beta\chi_B)\gamma_0$ we obtain

\begin{eqnarray}
\nonumber
\int_{D} \frac{(1+\tilde\beta\chi_B)\gamma_0}{\Re(\gamma_\omega/\alpha)}\sigma_D\left(\frac{\epsilon_\Omega}{\sigma_\Omega}C'-\tilde\beta\chi_B\right)\left|\nabla u^{[g]}_{\gamma_2}\right|^2 \dx\\
\label{eq:inequality_for_elec_3}
\leq g^* \left[ R((1+\tilde\beta\chi_B)\gamma_0) - \Re\left( \alpha R(\gamma_\omega)\right)\right] g\\
\label{eq:inequality_for_elec_4}
\leq \int_{D} (\epsilon_D C -\tilde\beta\sigma_D\chi_B)\left|\nabla u^{[g]}_{\gamma_2}\right|^2 \dx.
\end{eqnarray}

If we consider the case $c>0$, then $B\subseteq D$, $\tilde\beta \leq \frac{\epsilon_\Omega}{\sigma_\Omega} C'$ and inequality (\ref{eq:inequality_for_elec_3}) yield
\[
g^* \left[ R((1+\tilde\beta\chi_B)\gamma_0) - \Re\left( \alpha R(\gamma_\omega)\right)\right] g\geq 0
\]
so that $\Re\left( \alpha R(\gamma_\omega)\right)\leq R((1+\tilde\beta\chi_B)\gamma_0)$.

\vspace*{1em}

If we consider the case $c<0$, then $B\subseteq D$, $\tilde\beta \geq \frac{\epsilon_D}{\sigma_D}C $ and inequality (\ref{eq:inequality_for_elec_4}) yield
\[
g^* \left[ R((1+\tilde\beta\chi_B)\gamma_0) - \Re\left( \alpha R(\gamma_\omega)\right)\right] g\leq 0
\]
so that $\Re\left( \alpha R(\gamma_\omega)\right)\geq R((1+\tilde\beta\chi_B)\gamma_0)$.
This proves theorem \ref{thm:electrode}.

To justify remark~\ref{remark:electrode} assume that, $B\not\subseteq D$ and that there exists
 $g\in \C^N$, so that $\int_{B\setminus D} |\nabla u^{[g]}_{\gamma_0}|^2 \dx$ is very large and $\int_{D} |\nabla u^{[g]}_{\gamma_0}|^2 \dx$ is very small. Then with (\ref{eq:inequality_for_elec_1})
 and (\ref{eq:inequality_for_elec_2}) we obtain
\[
g^* \left[ R((1+\tilde\beta\chi_B)\gamma_0) - \Re\left( \alpha R(\gamma_\omega)\right)\right] g\not\geq 0\quad\mbox{if}\quad c>0
\]
or
\[
g^* \left[ R((1+\tilde\beta\chi_B)\gamma_0) - \Re\left( \alpha R(\gamma_\omega)\right)\right] g\not\leq 0\quad\mbox{if}\quad c<0.
\]
\hfill $\Box$
}

\section{Numerical results}\label{Sec:numerics}

In this section, we numerically demonstrate our new method for the practically relevant electrode setting of Section \ref{Subsec:setting_electrodes}. In all of the following settings, $m$ electrodes $\mathcal E_1,\mathcal E_2,...,\mathcal E_m$ are numbered as shown in the corresponding figures, and adjacent-adjacent dipole driving patterns are used according to this numbering, i.e., in the notation of section \ref{Subsec:setting_electrodes},
\[
(j_r,k_r):=(r,r+1) \quad \mbox{ for } r=1,\ldots ,m-1,\quad \mbox{ and } \quad (j_m,k_m):=(m,1).
\]
The EIT measurements at zero and non-zero frequency, and with and without ultra\-sound-mo\-du\-lation, are simulated 
by solving the equations (\ref{eq:shunt1})-(\ref{eq:shunt4}) using
MATLAB\textsuperscript{\textregistered} and the commercial FEM-software COMSOL\textsuperscript{\textregistered}.

At this point, let us stress again, that in a practical application of our new method, all required quantities are measured and no numerical simulations have to be carried out. 

\begin{exa}\label{exa:exa1}
Consider the setting illustrated in figure \ref{fig:2D_domain}. The imaging domain $\Omega$ is a two-dimensional circle with radius $10$ centered at $(0,0)$ and a circular anomaly $D$ (sketched in red in figure \ref{fig:2D_domain}) with radius $1.5$ is located
at $(5,0)$. On the boundary $\partial\Omega$, there are 16 electrodes $\mathcal E_1,\mathcal E_2,\ldots,\mathcal E_{16}$ attached.

\begin{figure}
\begin{center}
\includegraphics[scale=1]{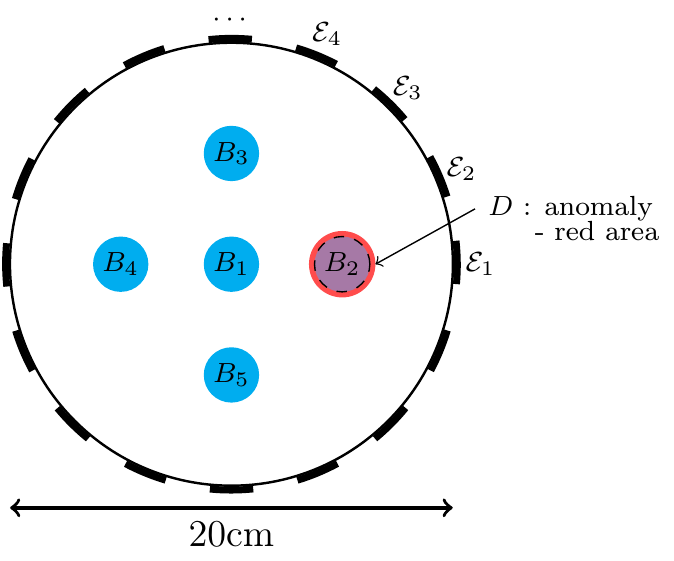}
\caption{Measurement setting of example \ref{exa:exa1}.}\label{fig:2D_domain}
\end{center}
\end{figure}

The DC and AC admittivities $\gamma_0$ and $\gamma_\omega$ are chosen as 
\begin{equation*}
\gamma_0:=1, \quad \mbox{ and } \quad \gamma_\omega:=\left\{ \begin{array}{ll}
1+\im \omega& \mbox{ in } \Omega\setminus D,\\
1+2\im \omega   & \mbox{ in } D,
\end{array}\right.
\end{equation*}
with $\omega=200\pi$, i.e. $\sigma_\Omega=\sigma_D=1$, $\epsilon_\Omega=1$, and $\epsilon_D=2$.
Hence, the ratio of the background conductivities is $\alpha=1+\im \omega$, and the contrast assumption in theorem~\ref{thm:electrode} is fulfilled with $c=\epsilon_D \sigma_\Omega -\epsilon_\Omega\sigma_D=1$.

Theorem \ref{thm:electrode} guarantees that 
\begin{equation}\labeq{main_impl_elect_cpos_numex1}
 B\subseteq D \quad \mbox{ implies that } \quad
\Re\left(\alpha  R( \gamma_\omega) \right) \leq R((1+\beta\chi_B)\gamma_0),
\end{equation}
i.e., that the ultrasound modulated DC measurements $R((1+\beta\chi_B)\gamma_0)$
are larger (in the sense of matrix definiteness) than (the real part of ratio-weighted) AC measurements $\Re\left(\alpha  R( \gamma_\omega) \right)$ if the ultrasound-modulated focusing region $B$ lies inside the inclusion $D$, and the modulation strenth $\beta>0$ is small enough. Remark \ref{remark:electrode} suggests that the converse of \req{main_impl_elect_cpos_numex1} is true if enough electrodes are used. To test this numerically, we choose 5 circular focusing regions $B_1,\ldots,B_5$ (sketched in blue in figure \ref{fig:2D_domain}) with radius 1.25. The modulation strength is chosen to be (cf. theorem \ref{thm:electrode}) 
\[
\beta=\omega^2 |c| \frac{\epsilon_\Omega}{\sigma_D(\sigma_\Omega^2 + \omega^2 \epsilon_\Omega^2)}\approx 0.9999.
\]

Table \ref{table:exa1} shows the eigenvalues of $R((1+\beta\chi_{B_j})\gamma_0)-\Re\left(\alpha R( \gamma_\omega) \right)$ for $j\in\lbrace1,\cdots,5\rbrace$. The numerical error ($\delta\approx 0.0005$) in table \ref{table:exa1} was estimated by repeating the calculations on a finer FEM grid. 
Taking into account this estimated numerical error, the monotonicity test $R((1+\beta\chi_{B_j})\gamma_0)\geq \Re\left(\alpha R( \gamma_\omega) \right)$ is only fulfilled for the focusing region $B_2$, which lies inside the inclusion. 
\begin{table}
\centering
\scalebox{0.9}{\begin{tabular}{c|c|c|c|c}
\hline
$B_1$ & $B_2$ & $B_3$& $B_4$ & $B_5$\\ \hline
   -0.0024 $\pm$ 0.0005 &   0.0000 $\pm$ 0.0005 &   -0.0098 $\pm$ 0.0005 &   -0.0105 $\pm$ 0.0005 &   -0.0098 $\pm$ 0.0005\\
   -0.0024 $\pm$ 0.0005 &   0.0000 $\pm$ 0.0005 &   -0.0097 $\pm$ 0.0005 &   -0.0104 $\pm$ 0.0005 &   -0.0097 $\pm$ 0.0005\\
   -0.0000 $\pm$ 0.0005 &   0.0000 $\pm$ 0.0005 &   -0.0003 $\pm$ 0.0005 &   -0.0004 $\pm$ 0.0005 &   -0.0003 $\pm$ 0.0005\\
   -0.0000 $\pm$ 0.0005 &   0.0000 $\pm$ 0.0005 &   -0.0002 $\pm$ 0.0005 &   -0.0003 $\pm$ 0.0005 &   -0.0002 $\pm$ 0.0005 \\
   -0.0000 $\pm$ 0.0005 &   0.0000 $\pm$ 0.0005 &   -0.0000 $\pm$ 0.0005 &   -0.0000 $\pm$ 0.0005 &   -0.0000 $\pm$ 0.0005 \\
   -0.0000 $\pm$ 0.0005 &   0.0000 $\pm$ 0.0005 &   -0.0000 $\pm$ 0.0005 &   -0.0000 $\pm$ 0.0005 &   -0.0000 $\pm$ 0.0005 \\
   -0.0000 $\pm$ 0.0005 &   0.0000 $\pm$ 0.0005 &   -0.0000 $\pm$ 0.0005 &   -0.0000 $\pm$ 0.0005 &   -0.0000 $\pm$ 0.0005 \\
   -0.0000 $\pm$ 0.0005 &   0.0000 $\pm$ 0.0005 &   -0.0000 $\pm$ 0.0005 &   -0.0000 $\pm$ 0.0005 &   -0.0000 $\pm$ 0.0005 \\
   -0.0000 $\pm$ 0.0005 &   0.0000 $\pm$ 0.0005 &   -0.0000 $\pm$ 0.0005 &   -0.0000 $\pm$ 0.0005 &   -0.0000 $\pm$ 0.0005 \\
   -0.0000 $\pm$ 0.0005 &   0.0000 $\pm$ 0.0005 &   -0.0000 $\pm$ 0.0005 &    0.0000 $\pm$ 0.0005 &   -0.0000 $\pm$ 0.0005 \\
    0.0000 $\pm$ 0.0005 &   0.0000 $\pm$ 0.0005 &    0.0000 $\pm$ 0.0005 &    0.0000 $\pm$ 0.0005 &    0.0000 $\pm$ 0.0005 \\
    0.0000 $\pm$ 0.0005 &   0.0000 $\pm$ 0.0005 &    0.0000 $\pm$ 0.0005 &    0.0000 $\pm$ 0.0005 &    0.0000 $\pm$ 0.0005 \\
    0.0003 $\pm$ 0.0005 &   0.0004 $\pm$ 0.0005 &    0.0005 $\pm$ 0.0005 &    0.0007 $\pm$ 0.0005 &    0.0005 $\pm$ 0.0005 \\
    0.0006 $\pm$ 0.0005 &   0.0005 $\pm$ 0.0005 &    0.0007 $\pm$ 0.0005 &    0.0009 $\pm$ 0.0005 &    0.0007 $\pm$ 0.0005 \\
    0.0142 $\pm$ 0.0005 &   0.0048 $\pm$ 0.0005 &    0.0146 $\pm$ 0.0005 &    0.0153 $\pm$ 0.0005 &    0.0146 $\pm$ 0.0005 \\
    0.0145 $\pm$ 0.0005 &   0.0049 $\pm$ 0.0005 &    0.0148 $\pm$ 0.0005 &    0.0155 $\pm$ 0.0005 &    0.0148 $\pm$ 0.0005 \\\hline
\end{tabular}}
\caption{Eigenvalues of $R((1+\beta\chi_{B_j})\gamma_0)-\Re\left(\alpha R( \gamma_\omega) \right)$, $j=1,\ldots,5$, for example \ref{exa:exa1}.}\label{table:exa1}
\end{table}
\end{exa}

\begin{exa}\label{exa:exa2}
Now we consider the three-dimensional setting illustrated in Figure \ref{fig:3D_domain}. The imaging domain $\Omega$ is a cylindrical domain with
\[
\Omega=\left\lbrace (x_1,x_2,x_3)\in\mathbb{R}^3\,:\|(x_1,x_2,0)\|<10,\ 0<x_3<5\right\rbrace.
\]
and a ball-shaped anomaly $D$ with radius $1.5$ is located at $(5,0,2.5)$. On the boundary $\partial\Omega$, there are 16 electrodes $\mathcal E_1,\mathcal E_2,\ldots,\mathcal E_{16}$ attached.

\begin{figure}
\begin{minipage}{.45\textwidth}
 \centering
  \includegraphics[scale=0.85]{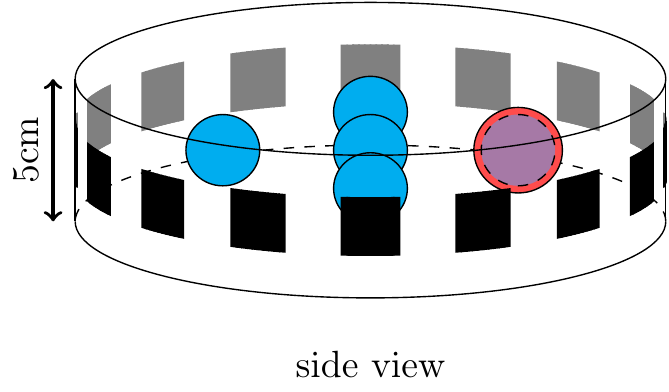}
\end{minipage}
\hspace*{0.05\textwidth}
\begin{minipage}{.45\textwidth}
 \centering
  \includegraphics[scale=0.85]{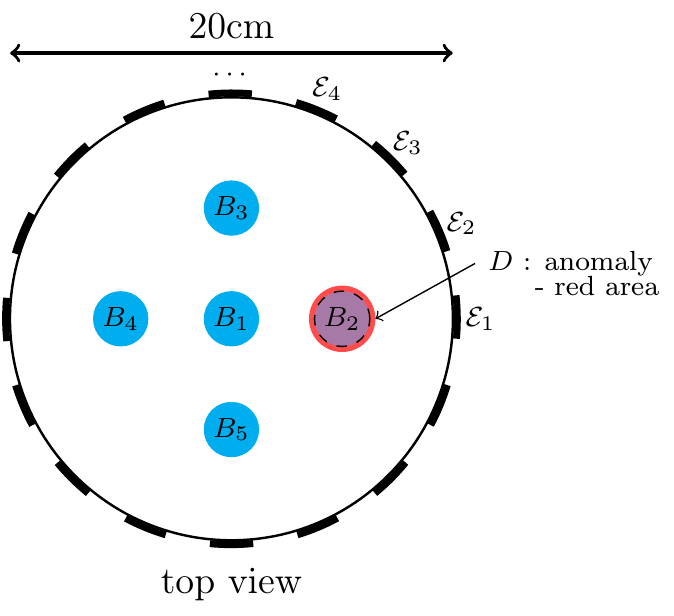}
\end{minipage}
\caption{Measurement setting of example \ref{exa:exa2}.}\label{fig:3D_domain}
\end{figure}

The DC and AC admittivities $\gamma_0$ and $\gamma_\omega$ are chosen as 
\begin{equation*}
\gamma_0:=\left\{ \begin{array}{ll}
1, & \mbox{ in } \Omega\setminus D,\\
2, & \mbox{ in } D,
\end{array}\right.
 \quad \mbox{ and } \quad \gamma_\omega:=\left\{ \begin{array}{ll}
1+\im \omega, & \mbox{ in } \Omega\setminus D,\\
2+\im \omega, & \mbox{ in } D,
\end{array}\right.
\end{equation*}
with $\omega=200\pi$, 
so that $\alpha=1+\im \omega$ and $c=-1$. As in example \ref{exa:exa1} we check the monotonicity relation for five focusing regions $B_1,B_2,B_3,B_4$ and $B_5$. The regions are ball-shaped with radius $1.25$ and centered at
$(0,0,2.5),(5,0,2.5),(0,5,2.5),(-5,0,2.5)$ and $(0,-5,2.5)$, respectively.
We choose $\beta$ according to theorem \ref{thm:electrode} as 
\[
\beta=\omega^2 |c|   \frac{\epsilon_D}{\sigma_D (\sigma_D \sigma_\Omega + \omega^2 \epsilon_D \epsilon_\Omega)}\approx 0.4999.
\]

Table \ref{table:exa2} shows the eigenvalues of $R((1-\beta\chi_{B_j})\gamma_0)-\Re\left(\alpha R( \gamma_\omega) \right)$ for $j=\lbrace1,\cdots,5\rbrace$. The numerical error ($\delta\approx 0.14$) in table \ref{table:exa2} was estimated by repeating the calculations on a finer FEM grid.  
Taking into account this estimated numerical error, the monotonicity test $R((1-\beta\chi_{B_j})\gamma_0)\leq \Re\left(\alpha R( \gamma_\omega) \right)$ is only fulfilled for the second focussing region, which lies inside the inclusion.
\begin{table}
\centering
\scalebox{0.9}{\begin{tabular}{c|c|c|c|c}
\hline
$B_1$ & $B_2$ & $B_3$& $B_4$ & $B_5$\\ \hline
   -0.1639 $\pm$ 0.0136&   -0.0800 $\pm$ 0.0136&   -0.1667 $\pm$ 0.0136&   -0.1723 $\pm$ 0.0136&   -0.1668 $\pm$ 0.0136\\
   -0.1621 $\pm$ 0.0136&   -0.0785 $\pm$ 0.0136&   -0.1642 $\pm$ 0.0136&   -0.1696 $\pm$ 0.0136&   -0.1642 $\pm$ 0.0136\\
   -0.0052 $\pm$ 0.0136&   -0.0054 $\pm$ 0.0136&   -0.0065 $\pm$ 0.0136&   -0.0079 $\pm$ 0.0136&   -0.0065 $\pm$ 0.0136\\
   -0.0034 $\pm$ 0.0136&   -0.0041 $\pm$ 0.0136&   -0.0045 $\pm$ 0.0136&   -0.0060 $\pm$ 0.0136&   -0.0045 $\pm$ 0.0136\\
   -0.0001 $\pm$ 0.0136&   -0.0003 $\pm$ 0.0136&   -0.0002 $\pm$ 0.0136&   -0.0003 $\pm$ 0.0136&   -0.0002 $\pm$ 0.0136\\
   -0.0000 $\pm$ 0.0136&   -0.0001 $\pm$ 0.0136&   -0.0000 $\pm$ 0.0136&   -0.0001 $\pm$ 0.0136&   -0.0000 $\pm$ 0.0136\\
   -0.0000 $\pm$ 0.0136&   -0.0000 $\pm$ 0.0136&   -0.0000 $\pm$ 0.0136&   -0.0000 $\pm$ 0.0136&   -0.0000 $\pm$ 0.0136\\
   -0.0000 $\pm$ 0.0136&   -0.0000 $\pm$ 0.0136&   -0.0000 $\pm$ 0.0136&   -0.0000 $\pm$ 0.0136&   -0.0000 $\pm$ 0.0136\\
   -0.0000 $\pm$ 0.0136&   -0.0000 $\pm$ 0.0136&   -0.0000 $\pm$ 0.0136&    0.0000 $\pm$ 0.0136&   -0.0000 $\pm$ 0.0136\\
    0.0000 $\pm$ 0.0136&   -0.0000 $\pm$ 0.0136&    0.0000 $\pm$ 0.0136&    0.0000 $\pm$ 0.0136&    0.0000 $\pm$ 0.0136\\
    0.0000 $\pm$ 0.0136&   -0.0000 $\pm$ 0.0136&    0.0000 $\pm$ 0.0136&    0.0000 $\pm$ 0.0136&    0.0000 $\pm$ 0.0136\\
    0.0000 $\pm$ 0.0136&   -0.0000 $\pm$ 0.0136&    0.0000 $\pm$ 0.0136&    0.0001 $\pm$ 0.0136&    0.0000 $\pm$ 0.0136\\
    0.0001 $\pm$ 0.0136&   -0.0000 $\pm$ 0.0136&    0.0011 $\pm$ 0.0136&    0.0019 $\pm$ 0.0136&    0.0011 $\pm$ 0.0136\\
    0.0001 $\pm$ 0.0136&    0.0000 $\pm$ 0.0136&    0.0017 $\pm$ 0.0136&    0.0025 $\pm$ 0.0136&    0.0017 $\pm$ 0.0136\\
    0.0169 $\pm$ 0.0136&    0.0000 $\pm$ 0.0136&    0.0728 $\pm$ 0.0136&    0.0788 $\pm$ 0.0136&    0.0717 $\pm$ 0.0136\\
    0.0173 $\pm$ 0.0136&    0.0000 $\pm$ 0.0136&    0.0749 $\pm$ 0.0136&    0.0822 $\pm$ 0.0136&    0.0753 $\pm$ 0.0136\\
\hline
\end{tabular}}\caption{Eigenvalues of $R((1-\beta\chi_{B_j})\gamma_0)- \Re\left(\alpha R( \gamma_\omega) \right)$, $j=1,\ldots,5$, for example \ref{exa:exa2}.}
\label{table:exa2}
\end{table}
\end{exa}

\begin{exa}\label{exa:exa3}
In our last example we test a large number or small balls in order to demonstrate up to which extend 
the method is capable of determining the shape of an inclusion. We consider the two- and three-dimensional 
example shown in figure \ref{fig:setting_rec_2D_combined}, and \ref{fig:rec_3D_setting_combined}, respectively.
In both settings,
\begin{equation*}
\gamma_0:=1,  \quad \mbox{ and } \quad
\gamma_\omega:=\left\{ \begin{array}{ll}
1+2 \im \omega& \mbox{in}~\Omega\backslash D,\\
1+\im \omega  & \mbox{in}~D,
\end{array}\right.
\end{equation*}
with $\omega=200\pi$, 
so that $\alpha=1+2\im \omega$ and $c=-1$. In accordance with theorem \ref{thm:electrode}, we choose 
\[
\beta=\omega^2 |c|   \frac{\epsilon_D}{\sigma_D (\sigma_D \sigma_\Omega + \omega^2 \epsilon_D \epsilon_\Omega)}\approx 0.4999.
\] 

\begin{figure}
 \centering
 \includegraphics[scale=1]{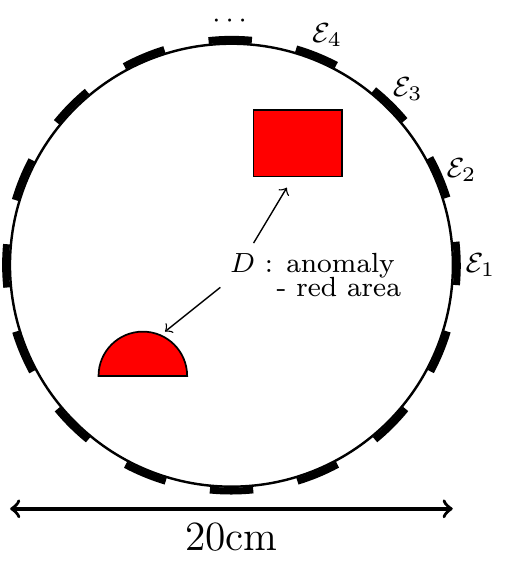}
 \caption{Two-dimensional measurement setting of example \ref{exa:exa3}.}
 \label{fig:setting_rec_2D_combined}
\end{figure}

\begin{figure}
\begin{minipage}{.45\textwidth}
 \centering
  \includegraphics[scale=0.9]{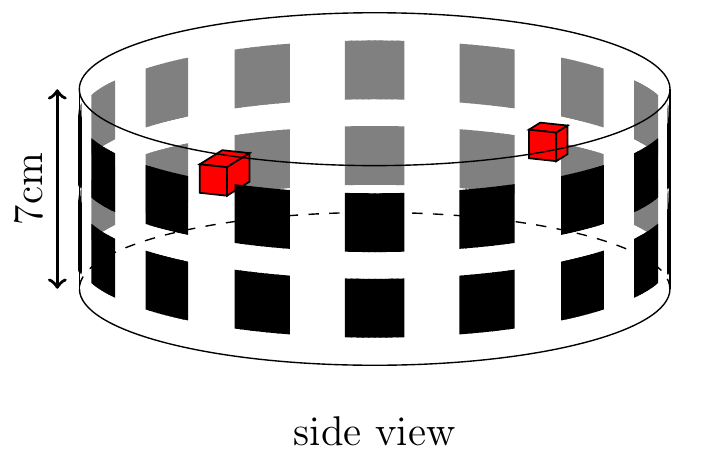}
\end{minipage}
\hspace*{0.05\textwidth}
\begin{minipage}{.45\textwidth}
 \centering
  \includegraphics[scale=0.9]{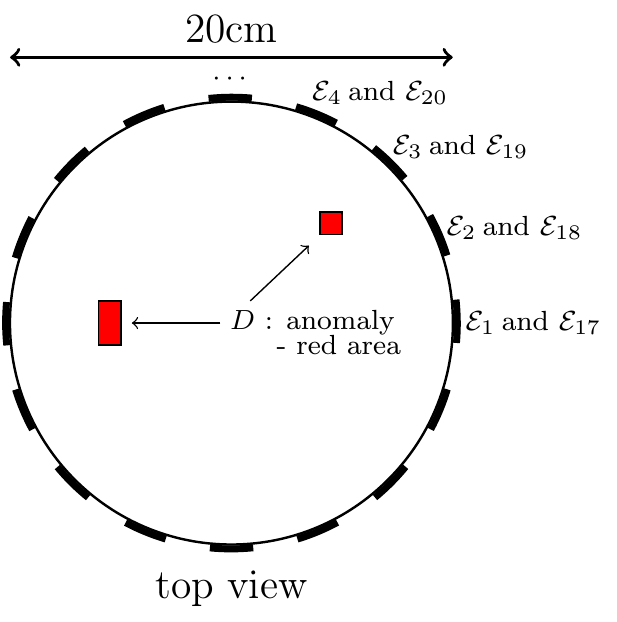}
\end{minipage}
\caption{Three-dimensional measurement setting of example \ref{exa:exa3}.}
\label{fig:rec_3D_setting_combined}
\end{figure}

We now consider a large number of test balls $B_j$, $j\in\lbrace 1,2,\ldots,N\rbrace$, and mark all balls for which
\begin{equation}\labeq{num_recon_test}
R((1-\beta\chi_{B_j})\gamma_0) - \Re\left(\alpha R( \gamma_\omega) \right) \leq \delta I,
\end{equation}
where $I$ is the identity matrix and $\delta>0$ is a regularization parameter. In both examples, 
we used the heuristically chosen value $\delta=0.5\cdot 10^{-7}$. Figure \ref{fig:rec_2D_combined} and figure \ref{fig:rec_3D_combined} show the test balls (in blue), the true inclusion (in red) and the balls for which \req{num_recon_test} is fulfilled (in grey). 

\begin{figure}
\begin{minipage}{.45\textwidth}
 \centering
 \includegraphics[trim = 137 222 97 257,clip,scale=0.35]{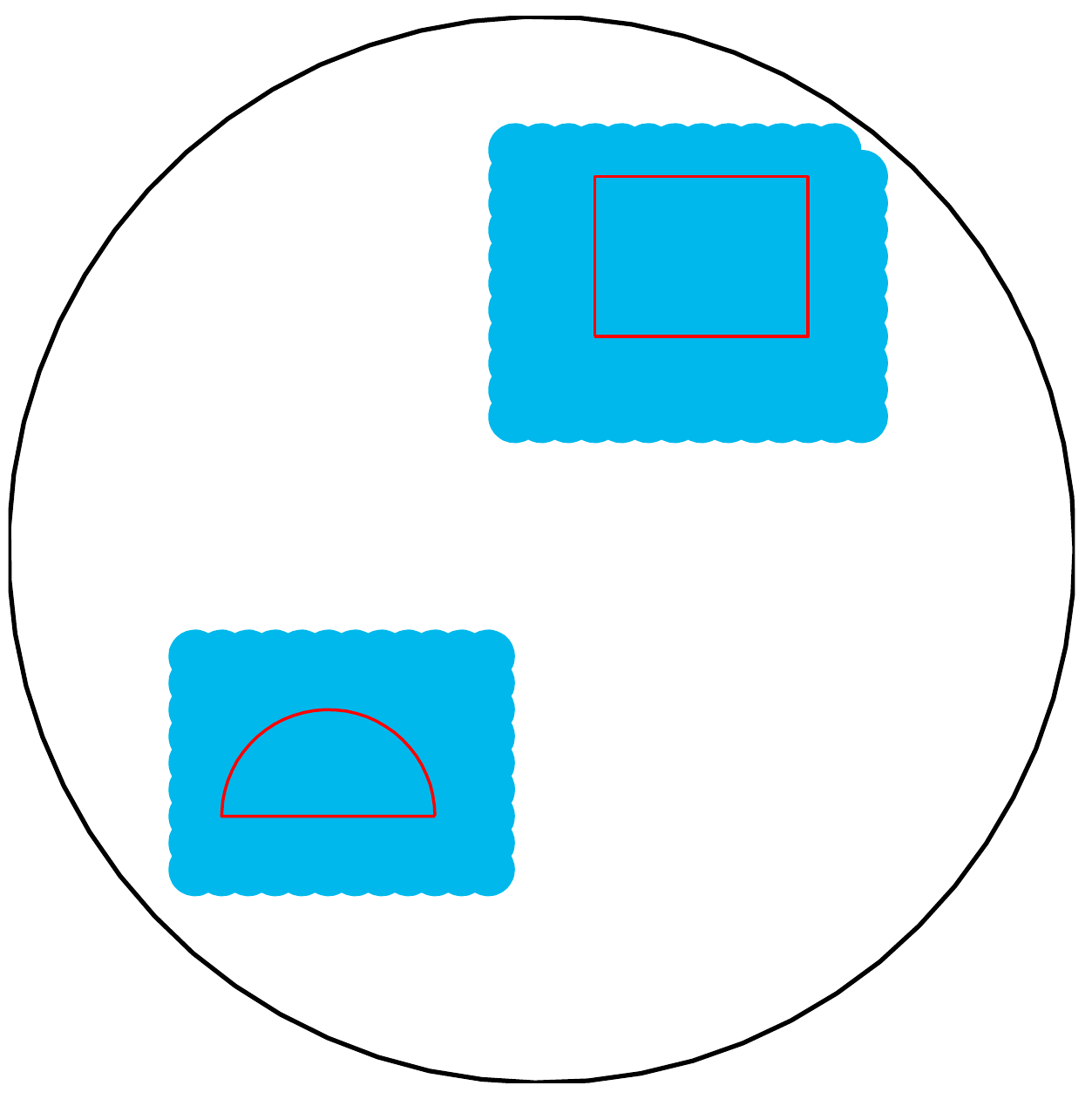}
\end{minipage}
\hspace*{0.05\textwidth}
\begin{minipage}{.45\textwidth}
 \centering
 \includegraphics[trim = 137 222 97 257,clip,scale=0.35]{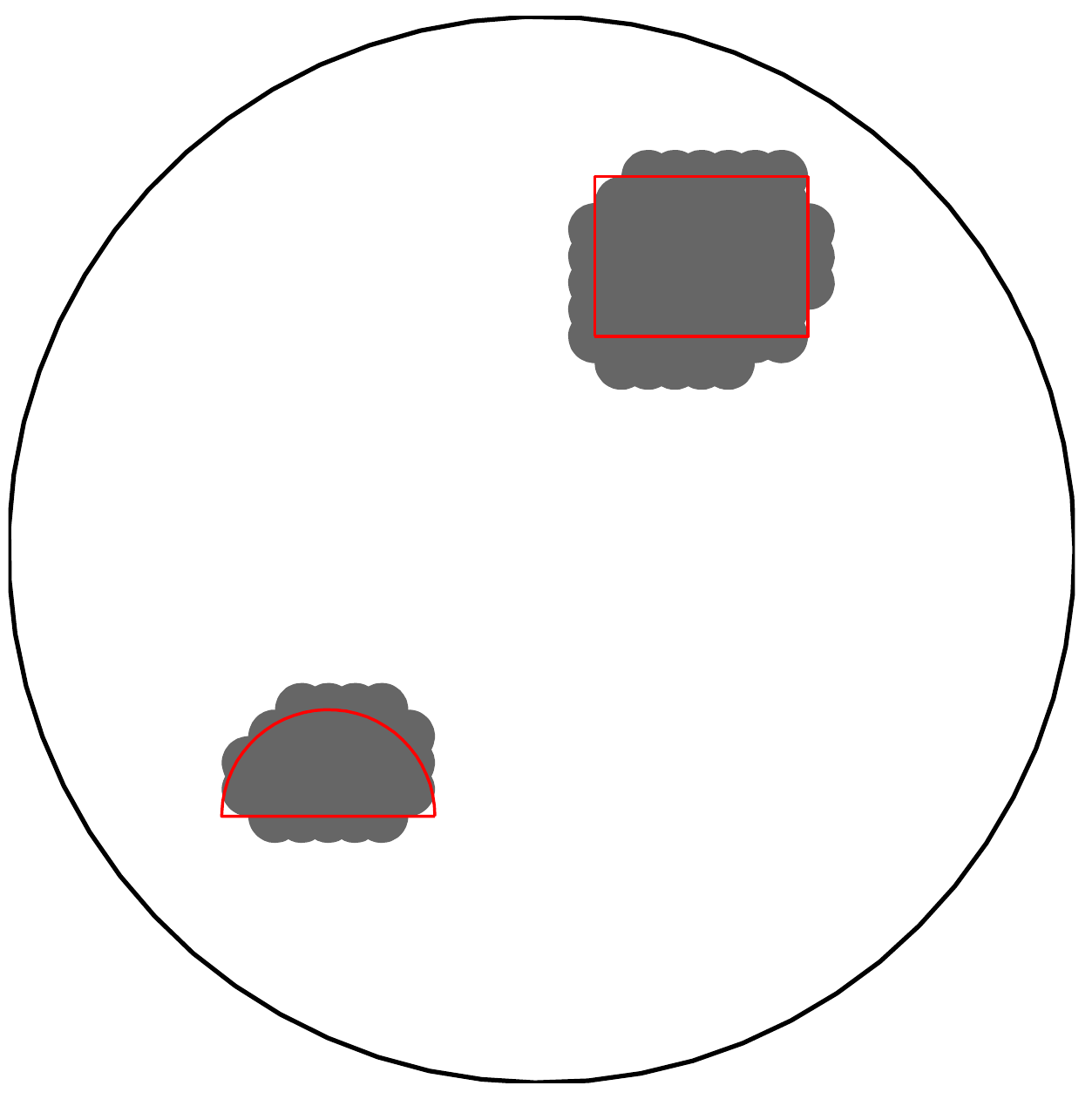}
\end{minipage} 
 \caption{Results for the two-dimensional setting in example \ref{exa:exa3}.}\label{fig:rec_2D_combined}
\end{figure}

\begin{figure}
 \centering
 \includegraphics[trim = 130 320 95 290,clip,scale=0.85]{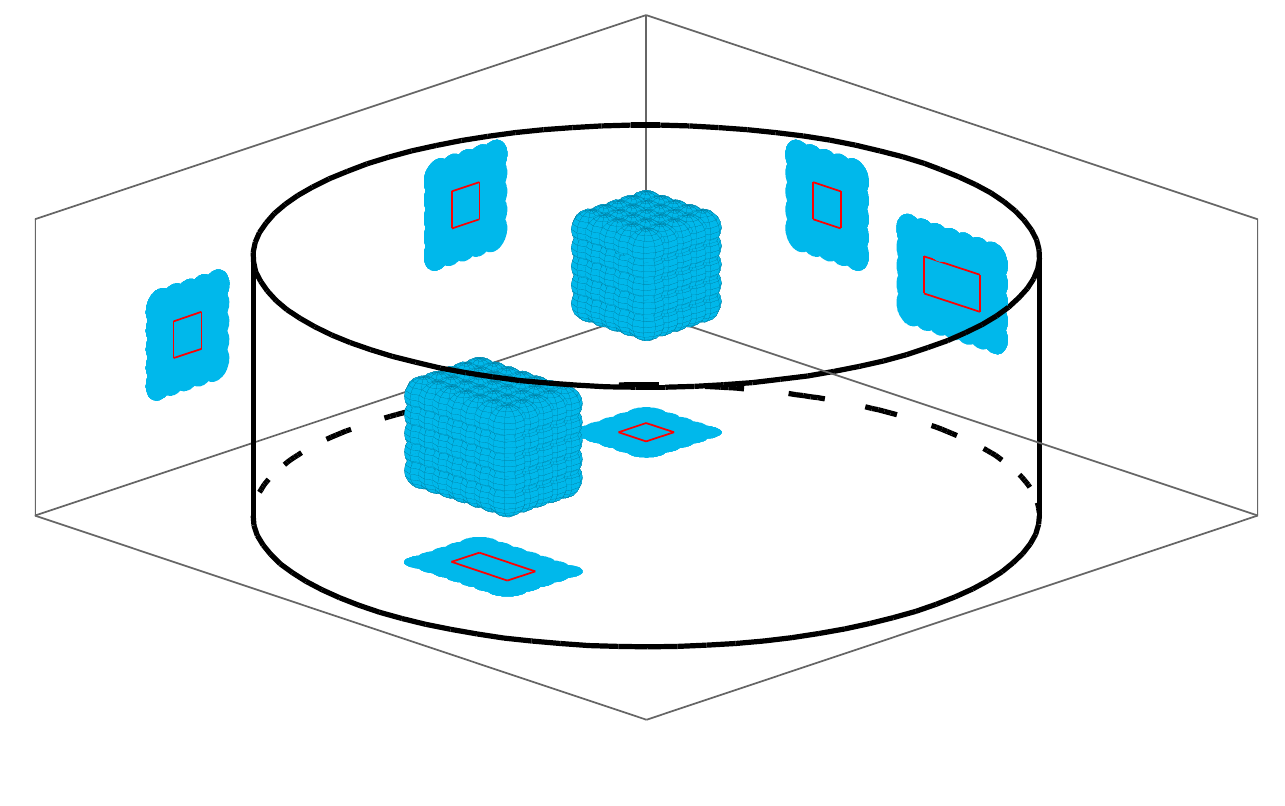}
 
 \vspace*{-1.5em}
 
 \includegraphics[trim = 130 320 95 290,clip,scale=0.85]{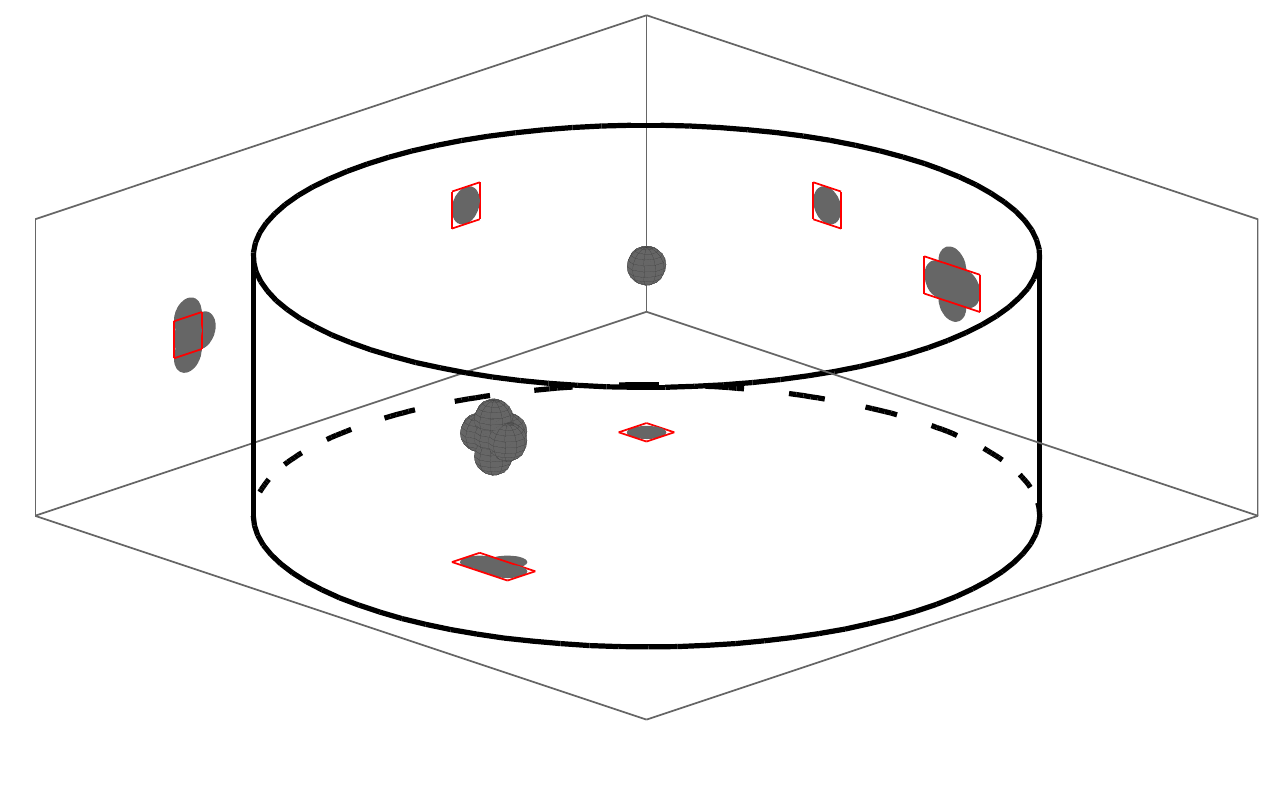}

 

 \caption{Results for the three-dimensional setting in example \ref{exa:exa3}.}\label{fig:rec_3D_combined}
\end{figure}
\end{exa}

\section{Conclusion and discussion}\label{Sec:conlusion_and_discussion}
We have developed a new method to detect and localize conductivity anomalies by combining frequency-difference electrical impedance tomography (EIT) with ultrasound-modulated EIT. Our method is based on comparing (in terms of matrix definiteness) ultrasound-modulated EIT measurements with (the real part of ratio-weighted) EIT measurements at a non-zero frequency. We showed that this comparison determines whether the focusing region of the ultrasound wave lies inside a conductivity anomaly or not. 

Remarkably, our new method merely utilizes the two sets of EIT measurements, and the background conductivity ratio which in turn can be estimated from EIT measurements. The method does not require any numerical simulations, forward calculations or geometry-dependent special solutions. It can be implemented without knowing the imaging domain shape or the electrode position, and is thus completely unaffected by modeling errors. 

We gave a rigorous mathematical proof for our new method for the case of continuous boundary data, and we justified why the method can be expected to work also for realistic electrode measurements, provided that the number of electrodes is large enough. 

The method is based on the assumption that the background conductivity is spatially constant, and that the anomalies fulfill the contrast condition that is required in frequency-difference EIT. 
Also, our method relies on the idealistic assumption of ultrasound modulated EIT, that it is possible to perfectly focus an ultrasound wave so that the conductivity changes only in a small test region. In real applications, background conductivities can be expected to be at least slightly inhomogeneous, and the ultrasound wave will also have some effect on the conductivity outside the focusing region. 
The performance of our new method in such a setting has yet to be evaluated. Let us however note that the matrix definiteness comparisons, that are used by our method, are principally stable (cf.\ remark \ref{rem:stable}) so that our arguably idealistic modeling assumptions only have to be approximately
valid. Moreover, monotonicity arguments also allow for worst-case testing and resolution guarantees (cf.\ \cite{harrachresolution}) which might be helpful in relaxing the idealistic assumptions in future studies.

\ack
BH and MU would like to thank the German Research
Foundation (DFG) for financial support of the project within
the Cluster of Excellence in Simulation Technology (EXC
310/1) at the University of Stuttgart.

\section*{References}

\bibliography{literaturliste}
\bibliographystyle{abbrv}

\end{document}